\documentclass{article}
\usepackage{amsmath}
\usepackage{amsfonts}
\usepackage{amssymb}
\usepackage{amsthm}
\usepackage{graphicx}
\usepackage{fancyhdr}
\usepackage{geometry}

\pagestyle{fancy}
\makeatletter
\DeclareRobustCommand{\format@sec@number}[2]{{\normalfont\upshape#1}#2}

\makeatother

\fancyhf{}
\fancyhead[L]{\itshape\nouppercase{\rightmark}}
\fancyhead[R]{\thepage}

\def\e{\varepsilon}

\def\a{\alpha}

\def\d{\delta}

\def\l{\lambda}

\def\R{\mathbb R}

\def\Z{\mathbb Z}
\def\T{\mathbb T}

\def\C{\mathbb C}

\def\Q{\mathbb Q}

\def\({\biggl(}
\def\){\biggr)}
\def\<{\mathbf{\langle}}
\def\>{\mathbf{\rangle}}

\numberwithin{equation}{section}

\newtheorem{theorem}[equation]{Theorem}
\newtheorem{proposition}[equation]{Proposition}
\newtheorem{lemma}[equation]{Lemma}

\theoremstyle{definition}

\theoremstyle{definition}

\theoremstyle{remark}
\newtheorem*{remark}{Remark}

\title{Non-standard real-analytic realizations of some rotations of the circle}
\author{Shilpak Banerjee}
%\date{}

%%% BEGIN DOCUMENT
\begin{document}

\maketitle

\begin{abstract}
We extend some aspects of the smooth approximation by conjugation method to the real-analytic set-up and create examples of zero entropy, uniquely ergodic real-analytic diffeomorphisms of the two dimensional torus metrically isomorphic to some (Liouvillian) irrational rotations of the circle.  
\end{abstract}

\section{Introduction}

A smooth ($C^\infty$) diffeomorphism $f$ of a compact manifold $M$ preserving a smooth measure $m$ is said to be a \emph{smooth realization} of an abstract system $(X,T,\nu)$ if there exists a metric isomorphism between the two systems. This problem of smooth realization remains stubbornly intractable in general, despite the fact that after assuming finiteness of the measure theoretic entropy of the system $(X,T,\nu)$ and in some cases large ($>2$) dimension of $M$, there is no known restriction for $(X,T,\nu)$ to be realized as a smooth dynamical system on $M$. See section 7.2 in \cite{FK} for a detailed discussion of this question.

Several variants of the smooth realization problem described above exists and in this paper we are interested in a particular case, namely the problem of \emph{non-standard realization} in the real-analytic set-up. The problem of non-standard realization starts with a diffeomorphism $g$ of a compact manifold $N$ preserving a smooth measure $\nu$ and seeks out a diffeomorphism $f$ of a compact manifold $M$ preserving a smooth measure $m$ such that the two systems are metrically isomorphic but not smoothly isomorphic. In this case we say that $f$ is a \emph{non-standard realization} of $g$ on the manifold $M$.

The earliest examples of non-standard realization appears in \cite{AK} where D.V. Anosov and A. Katok developed a scheme known as the \emph{approximation by conjugation} method. They used this scheme to produce examples of non-standard realization of some irrational rotations of the circle as an ergodic smooth diffeomorphism of any manifold $M$ preserving a smooth measure $m$ provided the manifold admits an effective smooth action of the circle by $m$-preserving smooth diffeomorphisms. The diffeomorphisms constructed in \cite{AK} realized circle rotations with Liouvillan rotation numbers. However it was not clear from this construction which Liouvillian rotations were realized. Later B. Fayad, M. Saprykina and A. Windsor extended this result in \cite{FSW} and proved that \emph{any} Liouvillian rotation of the circle can be realized. Moreover they also proved that if $M$ is a finite dimensional torus, then this realization can be made uniquely ergodic. Proving unique ergodicity using this technique brings out another aspect of this theory since one needs to retain control over every orbit as opposed to almost every orbit. 

Also it should be mentioned that in \cite{AK}, examples of non-standard realization of some ergodic translations of any finite dimensional torus were produced on any manifold admitting an effective smooth action of the circle by $m$-preserving diffeomorphisms. Recently, M. Benhenda in \cite{Mb-ts} expanded this result to the case of ergodic translations by vectors with \emph{one arbitrary} Liouvillian coordinate on any finite dimensional torus.  

To complete the history of the smooth realization problem and its variants, it should also be noted that recently M. Benhenda in \cite{Mb-gk} produced examples of smooth diffeomorphisms on the product of a circle with a Hilbert cube having perfect Kronecker sets as their spectrum.     

In our paper we seek to use the \emph{approximation by conjugation} method and find examples of \emph{real-analytic} uniquely ergodic diffeomorphisms on the two dimensional torus that are metrically isomorphic to some irrational rotation of the circle. However requiring the diffemorphism to be real-analytic imposes some additional difficulties (see section 7.2 in \cite{FK}). One possible way to overcome such difficulties is to work in manifolds which have a large collection of real-analytic diffeomorphisms commuting with the circle action and their singularities are uniformly bounded away from a complex neighbourhood of the real domain. An example of such a construction appears in \cite{FK-wm}. In our case we work in the torus and customize the approximation by conjugation scheme with a trick that appeared in \cite{BK}. The key idea is to use entire functions that approximate certain carefully chosen step functions.

Our main theorem can be stated as follows:

\begin{theorem}
There exists uniquely ergodic real-analytic diffeomorphisms of the two dimensional torus $\T^2$ preserving the Lebesgue measure that are metrically isomorphic to some irrational rotations of the circle. 
\end{theorem}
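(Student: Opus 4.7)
The plan is to adapt the Anosov--Katok approximation by conjugation method to the real-analytic category, combining the scheme of \cite{AK,FSW} with the entire-function trick of \cite{BK}. Let $R_t(x,y)=(x+t,y)$ denote the standard $S^1$-action on $\T^2$, and let $\a=\lim_n\a_n$ where $\a_n=p_n/q_n$ is a Cauchy sequence of rationals to be chosen inductively along with conjugacies $h_n$. Set
\[f_n = H_n\circ R_{\a_{n+1}}\circ H_n^{-1}, \qquad H_n=h_1\circ\cdots\circ h_n,\]
where each $h_n$ is an $m$-preserving, real-analytic diffeomorphism of $\T^2$ commuting with $R_{\a_n}$. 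This commutation relation forces $f_n$ to depend on $H_n$ only through the small perturbation $R_{\a_{n+1}-\a_n}$, so taking $|\a_{n+1}-\a_n|$ to decay fast enough relative to the analytic norm of $H_n$ makes $(f_n)$ Cauchy in the real-analytic topology on a complex strip of uniform positive width; the limit $f$ will be the desired diffeomorphism. The role of the $h_n$'s is to refine a sequence of Rokhlin-style towers whose measurable factor converges to the circle rotation $R_\a$.

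The main obstacle is the real-analyticity constraint on $h_n$. In the $C^\infty$ case one builds $h_n$ from cut-off functions so that it permutes, in a controlled way, the vertical columns $[k/q_n,(k+1)/q_n)\x\T$ of a natural Rokhlin tower for $R_{\a_n}$ while being the identity near column boundaries; no such localized construction survives in the real-analytic category. Following \cite{BK} I will replace each step function appearing in the smooth conjugacies by an entire surrogate, e.g.\ a trigonometric polynomial of very high degree that is uniformly close to the step function off a tiny neighborhood of its jumps and whose holomorphic extension has bounded norm on a fixed complex strip. The degree of the trigonometric polynomial needed will grow rapidly with $q_n$, forcing $\a_{n+1}$ to be chosen super-exponentially close to $\a_n$; the rotation numbers $\a$ realized this way are therefore Liouvillian, and the family of such $\a$'s is what the theorem alludes to by ``some irrational rotations.'' The delicate point is to balance the approximation error against the analytic-norm blow-up so that the strip of analyticity of $H_n^{-1}\circ R_{\a_{n+1}}\circ H_n$ does not shrink to zero.

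Once the sequence of parameters is tuned so that $f$ exists as a real-analytic diffeomorphism, the two dynamical properties follow from by-now standard arguments in Anosov--Katok theory. For metric isomorphism with $R_\a$, the conjugates $H_n$ transport the finite partition of the base circle into $q_n$-arcs to a sequence of measurable partitions of $\T^2$, which by the choice of $h_n$ generates an $f$-invariant sub-$\sigma$-algebra on which $f$ acts as $R_\a$; the complementary factor collapses in the limit because of the control on $|\a_{n+1}-\a_n|$. For unique ergodicity, I follow the strategy of \cite{FSW}: strengthen the inductive estimates so that $h_{n+1}$ is $C^0$-close to the identity on a set of measure tending to $1$ at a polynomial rate in $q_n$, and then extract a uniform ergodic theorem on every orbit, not only typical ones, by comparing the combinatorial orbit of $R_{\a_{n+1}}$ on $\T^2$ with its push-forward under $H_n$. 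The only content beyond the smooth case is to verify that these closeness requirements remain compatible with the real-analytic tolerances from the second step; this compatibility, and the explicit Diophantine-type conditions it imposes on the sequence $(q_n)$, is where the bulk of the inductive bookkeeping will go.
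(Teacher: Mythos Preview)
Your outline follows the same overall architecture as the paper --- Anosov--Katok conjugation with the entire-function trick of \cite{BK}, convergence by taking $|\a_{n+1}-\a_n|$ tiny relative to the analytic norms, and unique ergodicity \`a la \cite{FSW} --- but it glosses over one genuine obstacle that the paper treats as a central technical contribution. When you replace the step functions by entire approximants, you only control $h_{n+1}$ on the complement of a small ``error set'' near the jumps; on that error set the combinatorics is uncontrolled. As a consequence the pulled-back partitions $H_n^{-1}\mathcal{T}_{q_n}$ are \emph{not} monotonic in $n$: the atom at stage $n+1$ need not sit inside the atom at stage $n$, only inside it \emph{modulo} the error set. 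The ``by-now standard'' Anosov--Katok lemma (Lemma~4.1 of \cite{AK}) requires monotone generating partitions and therefore does not apply; the paper proves a genuine extension (its Lemma~\ref{mtl}) that accommodates summable error sets and the resulting almost-monotonicity, and also needs the error sets to be $T_n$-invariant, which constrains their shape. Your sketch of the isomorphism (``generates an $f$-invariant sub-$\sigma$-algebra \ldots\ the complementary factor collapses'') does not confront this, and without it the argument has a real gap.

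Two smaller points. First, your description of $h_{n+1}$ is off: it is not $C^0$-close to the identity on a large set, but close to a specific discontinuous rearrangement $\tilde h_{n+1}$ built from several shears (the paper's ``mystic square'' picture) that carries the thin vertical partition $\mathcal{T}_{l_n^2 q_n}$ to a partition $\mathcal{G}_{l_n,q_n}$ of small diameter; it is this diameter estimate, not closeness to identity, that drives both the generating property and the Birkhoff-sum estimate for unique ergodicity. Second, the paper does not use trigonometric polynomials but rather periodic sums of double exponentials $e^{-e^{-A(x-a)}}$; either choice can work, but you should check that your trigonometric surrogates retain the required $1/q_n$-periodicity so that $h_{n+1}$ commutes with $\phi^{\a_n}$.
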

In this article we produce a complete proof of the above theorem. However we would also like to point out that the result easily generalizes to a higher dimensional torus. In section \ref{section_high_dimension}, we outline a proof of the above theorem in the case of a torus of any dimension. 

We would also like to point out that very recently P. Kunde has proved the existence of a Real-analytic weak mixing diffeomorphism preserving a measurable Riemannian metric in \cite{Ku} using some of the techniques developed in this paper. This solves Problem 3.9 posed by R. Gunesch and A. Katok in \cite{GK}.

%%%%%%%%%%%%%%%%%%%%%%%%%%%%%%%%%%%%%%%%%%%%%%%%%%%%%%%%%%%%%%%%%%%%%%%%%%%%%%%%%

\section{Preliminaries} \label{section preliminaries}

The purpose of this section is to introduce some notations, give a brief description of the topology of real-analytic diffeomorphisms and finally lay down the approximation by conjugation scheme customized for our problem.

Throughout this paper, we will denote the two dimensional torus by $\T^2:=\R^2/\Z^2$. We will use $\mu$ to denote the standard Lebesgue measure on $\T^2$ and $\l$ to denote the Lebesgue measure on the unit circle $\T^1:=\R/\Z$.

\subsubsection*{The topology of real-analytic diffeomorphisms}

We start this section by giving a description of the space of measure preserving real-analytic diffeomorphisms on $\T^2$. 

Any real-analytic diffeomorphism on $\T^2$ homotopic to the identity admits a lift to a map from $\R^2$ to $\R^2$ of the form $F(x_1,x_2)=(x_1+f_1(x_1,x_2),x_2+f_2(x_1,x_2))$, where $f_i:\R^2\to \R^2$ is a $\Z^2$-periodic real-analytic functions. Any real-analytic $\Z^2$-periodic function on $\R^2$ can be extended as a complex analytic function using the natural embedding $(x_1,x_2)\mapsto (x_1+iy_1,x_2+iy_2)$ of $\R^2$ in $\C^2$. For a fixed $\rho>0$, let $\Omega_\rho:=\{(z_1,z_2)\in\C^2:|\text{Im}(z_1)|<\rho \text{ and  }|\text{Im}(z_2)|<\rho\}$ and for a function $f$ defined on this set, define $\|f\|_\rho:=\sup_{\Omega_\rho}|f((z_1,z_2))|$.  We define $C^\omega_\rho(\T^2)$ to be the space of all $\Z^2$-periodic real-analytic functions on $\R^2$ that extends to a holomorphic function on $\Omega_\rho$ and $\|f\|_\rho<\infty$.

Now we define, $\text{Diff }^\omega_\rho(\T^2,\mu)$ to be the set of all measure preserving real-analytic diffeomorphisms of $\T^2$ homotopic to the identity, whose lift $F$ to $\R^2$ satisfies $f_i\in C^\omega_\rho(\T^2)$.

The metric in $\text{Diff }^\omega_\rho(\T^2,\mu)$ is defined by 
\begin{align*}
d_\rho(F,G)=\max_{i=1,2}\{\inf_{n\in\Z}\|f_i-g_i+n\|_\rho\}
\end{align*}
Let $(F_1,F_2)$ be the lift of a diffeomorphism $F\in\text{Diff }^\omega_\rho(\T^2,\mu)$, we define 
\begin{align*}
\|DF\|_\rho:=\max_{\substack{i=1,2\\j=1,2}}\Big\|\frac{\partial F_i}{\partial x_i}\Big\|_\rho
\end{align*}

This completes the description of the analytic topology necessary for our construction. Also throughout this paper, the word ``diffeomorphism" will refer to a real-analytic diffeomorphism. Also, the word ``analytic topology" will refer to the topology of $\text{Diff }^\omega_\rho(\T^2,\mu)$ described above. See \cite{S} for a more extensive treatment of these spaces.

\subsubsection*{The approximation by conjugation scheme}

Next in this section, we give a brief description of the approximation by conjugation scheme tailored for our purpose. This scheme was introduced in \cite{AK}. For a relatively upto date description of this scheme and its usefulness, one might refer to \cite{FK}.

We start with $\phi$, a measure preserving $\T^1$ action on the torus $\T^2:=\R^2/\Z^2$ defined as follows:
\begin{align*}
\phi^t\big((x_1,x_2)\big)=(x_1+t,x_2)
\end{align*}
The required measure preserving ergodic diffeomorphism $T$ is going to be constructed as a limit of periodic measure preserving diffeomorphisms $T_n$ in the analytic topology described above. The diffeomorphisms $T_n$ are defined by 
\begin{align*}
T_n:=H_n^{-1}\circ\phi^{\a_n}\circ H_n
\end{align*}
where $\a_n\in \Q\cap [0,1)$ and $H_n\in \text{Diff }^\omega_\rho(\T^2)$. The diffeomorphisms $H_n$ and the rationals $\a_n$ s are constructed inductively. Given $H_n$ and $\a_n$, we construct at the $n+1$ th stage, a diffeomorphism $h_{n+1}\in  \text{Diff }^\omega_\rho(\T^2)$ and define:
\begin{align*}
H_{n+1}=h_{n+1}\circ H_n
\end{align*}
$\a_{n+1}$ is then defined and we would require it to be extremely close to $\a_n$ in order to ensure the convergence of our construction.

The construction of $h_n$ to suit our purpose is done in section \ref{hn}. At each stage we ensure that $T_n$ satisfies a finite version of the conjugacy that we would need eventually to conclude existence of the required metric isomorphism for the limit diffeomorphism. The measure theoretic result we require in that direction appears in section \ref{mt}.

%%%%%%%%%%%%%%%%%%%%%%%%%%%%%%%%%%%%%%%%%%%%%%%%%%%%%%%%%%%%%%%%%%%%%%%%%%%%%%%%%%

\section{A measure theoretic lemma}\label{mt}

In this section, our goal is to prove an abstract lemma from measure theory which is a slight generalization of Lemma 4.1 in \cite{AK}.

First we recall a few definitions. A sequence of partitions $\{\mathcal{P}_n\}_n$ of a \emph{Lebesgue space}\footnote{Also known as a \emph{standard probability space} or a \emph{Lebesgue-Rokhlin space}. We consider those spaces which are isomorphic mod $0$ to the unit interval with the usual Lebesgue measure.} $(M,\mu)$ is called \emph{generating} if there exists a measurable subset $M'$ of full measure such that $\{x\}=\cap_{n=1}^\infty\mathcal{P}_n(x)\;\;\forall x\in M'$. We say that the sequence $\{\mathcal{P}_n\}_n$ is \emph{monotonic} if $\mathcal{P}_{n+1}$ is a refinement of $\mathcal{P}_n$. 

Lemma 4.1 from \cite{AK} gave us an easily checkable finite version of the conjugacy that one can use to prove the existence of a metric isomorphism of the limiting diffeomorphisms. Since the generating partitions used in the $C^\infty$ non-standard realization problem can easily made to be monotonic, this Lemma was sufficient. But in the real-analytic case, our construction is not flexible enough to guarantee monotonicity, so we need a modified version. Let us recall Lemma 4.1 from \cite{AK} since we will need it for our version.

\begin{lemma} \label{4.1}
Let $\{M^{(i)},\mu^{(i)}\},\;i=1,2$ be two Lebesgue spaces. Let $\mathcal{P}_n^{(i)}$ be a \emph{monotonic} sequences of generating finite partitions of $M^{(i)}$. Let $T_n^{(i)}$ be a sequence of automorphisms of $M^{(i)}$ satisfying $T_n^{(i)}\mathcal{P}_n^{(i)}=\mathcal{P}_n^{(i)}$ and suppose $ \lim_{n\to\infty}T_n^{(i)}=T^{(i)}$ weakly.
Suppose $\exists$  metric isomorphisms $K_n:M^{(1)}/\mathcal{P}_n^{(1)}\to M^{(2)}/\mathcal{P}_n^{(2)}$ satisfying:
\begin{align}
& K_n^{-1}T_n^{(2)}|_{\mathcal{P}_n^{(2)}}K_n=T_n^{(1)}|_{\mathcal{P}_n^{(1)}}\\
& K_{n+1}(\mathcal{P}_{n}^{(1)})=K_n(\mathcal{P}_{n}^{(1)})
\end{align}
Then the automorphisms $T^{(1)}$ and $T^{(2)}$ are metrically isomorphic.
\end{lemma}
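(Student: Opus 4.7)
The plan is to construct the desired metric isomorphism $K:M^{(1)}\to M^{(2)}$ as a pointwise a.e.\ limit of the partition-quotient maps $K_n$, and then to derive the intertwining relation $K\circ T^{(1)} = T^{(2)}\circ K$ by passing to the limit in condition~(1) using the weak convergence $T_n^{(i)}\to T^{(i)}$.

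First I would exploit condition~(2) together with the monotonicity of $\{\mathcal{P}_n^{(1)}\}$. Because $\mathcal{P}_{n+1}^{(1)}$ refines $\mathcal{P}_n^{(1)}$, for each $x$ the atom $\mathcal{P}_{n+1}^{(1)}(x)$ is contained in $\mathcal{P}_n^{(1)}(x)$. Applying $K_{n+1}$ and using (2),
\[
K_{n+1}\bigl(\mathcal{P}_{n+1}^{(1)}(x)\bigr)\subset K_{n+1}\bigl(\mathcal{P}_n^{(1)}(x)\bigr)=K_n\bigl(\mathcal{P}_n^{(1)}(x)\bigr).
\]
Hence the sets $B_n(x):=K_n(\mathcal{P}_n^{(1)}(x))$ form a decreasing nested sequence of measurable subsets of $M^{(2)}$ with $\mu^{(2)}(B_n(x))=\mu^{(1)}(\mathcal{P}_n^{(1)}(x))\to 0$ for a.e.\ $x$ (by the generating property on $M^{(1)}$). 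Since $\{\mathcal{P}_n^{(2)}\}$ is a generating monotone sequence, the quotient system realizes $(M^{(2)},\mu^{(2)})$ as the inverse limit of the finite quotients $M^{(2)}/\mathcal{P}_n^{(2)}$, so the consistent family $(B_n(x))_n$ determines a single point $K(x)\in M^{(2)}$ for a.e.\ $x$. The resulting map $K$ is measure preserving because each $K_n$ is a measure isomorphism of the finite quotients and the $\sigma$-algebras generated by $\{\mathcal{P}_n^{(i)}\}$ exhaust the respective measure algebras.

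To verify the conjugacy, I would test against measurable sets $C\subset M^{(2)}$. Given $\varepsilon>0$, choose $n$ large and a union of atoms $C_n\in\sigma(\mathcal{P}_n^{(2)})$ with $\mu^{(2)}(C\triangle C_n)<\varepsilon$. By construction $K^{-1}(C_n)$ agrees mod zero with $K_n^{-1}(C_n)$, a union of atoms of $\mathcal{P}_n^{(1)}$. Condition~(1) gives
\[
T_n^{(1)}K_n^{-1}(C_n)=K_n^{-1}T_n^{(2)}(C_n),
\]
and weak convergence $T_n^{(i)}\to T^{(i)}$ (i.e.\ $\mu^{(i)}(T_n^{(i)}A\triangle T^{(i)}A)\to 0$ for every measurable $A$) allows me to replace each $T_n^{(i)}$ by $T^{(i)}$ with vanishing symmetric-difference error. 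Letting $n\to\infty$ and $\varepsilon\to 0$ yields $T^{(1)}K^{-1}(C)=K^{-1}T^{(2)}(C)$ for every measurable $C$, which is the desired metric isomorphism.

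The main obstacle is the pointwise construction of $K$: condition~(2) only equates set-theoretic images of the coarser partition and gives no canonical choice of representatives, so the bulk of the work is to show that the nested sequences $B_n(x)$ genuinely shrink to single points a.e.\ (rather than to sets of measure zero that still contain more than one point). This is precisely where the generating hypothesis on both sides is indispensable, and where Lemma~\ref{4.1} would break down if one only required the partitions to be asymptotically generating in a weaker sense, thus explaining why the authors need a modified version in the real-analytic setting.
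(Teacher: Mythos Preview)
Your construction is correct and coincides with the one the paper indicates: the paper does not prove Lemma~\ref{4.1} (it is quoted from \cite{AK}) but records that the isomorphism is defined by $K(x):=\bigcap_{n=1}^\infty K_n(\mathcal{P}_n^{(1)}(x))$, which is exactly your nested-atom limit $\bigcap_n B_n(x)$.

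One correction to your closing commentary: the reason the authors need the modified Lemma~\ref{mtl} is not that the generating hypothesis becomes weaker in the real-analytic setting. The partitions $\mathcal{F}_{q_n}$ remain generating; what fails is \emph{monotonicity}. The analytic conjugacies $h_{n+1}$ only approximate the discontinuous combinatorial maps up to an error set $E_{q_{n+1}}$, so one only gets $\mathcal{F}_{q_{n+1}}(x)\setminus E_{q_{n+1}}\subset\mathcal{F}_{q_n}(x)$ rather than honest refinement. Lemma~\ref{mtl} is designed precisely to absorb these summable error sets while keeping your nested-intersection argument intact on each $M^{(i)}\setminus\bigcup_{m\ge N}E_m^{(i)}$.
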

We would also like to point out at this point of time that $K$ in the proof was defined to be
\begin{align}\label{K(x)}
K(x):=\cap_{n=1}^\infty K_n(P_n^{(1)}(x))\qquad\text{a.e.}\;\; x\in M^{(1)}
\end{align} 

Now we prove this variation which will allow us to accommodate a marginal ``twist" that will appear in our construction.

\begin{lemma}\label{mtl}
Let $\{M^{(i)},\mu^{(i)}\},\;i=1,2$ be two Lebesgue spaces and let $\mathcal{P}_n^{(1)}$. Let $\mathcal{P}_n^{(i)}$ be a sequence of generating finite partitions of $M^{(i)}$. Let $\{\e_n\}$ be a sequence of positive numbers satisfying $\sum_{n=1}^\infty\e_n<\infty$. In addition, assume that there exists a sequence of sets $\{E_n^{(i)}\}$ in $M^{(i)}$ satisfying:
\begin{align}
& \mu^{(1)}(E_n^{(1)})=\mu^{(2)}(E_n^{(2)})<\e_n\label{mtl 1}\\
& P_{n+1}^{(1)}(x)\setminus E_{n+1}^{(1)}\subset P_{n}^{(1)}(x)\quad\forall\; x\in M^{(1)}\setminus E_{n+1}^{(1)}\label{mtl 2}\\
& P_{n+1}^{(2)}(y)\subset P_{n}^{(2)}(y)\quad \forall\; y\in M^{(2)}\label{mtl 3}
\end{align}
 Let $T_n^{(1)}$ and $T_n^{(2)}$ be two sequences of automorphisms of the spaces $M^{(1)}$ and $M^{(2)}$ satisfying:
\begin{align}
& T_n^{(i)}\mathcal{P}_n^{(i)}=\mathcal{P}_n^{(i)}\quad\quad i=1,2\label{mtl 4}\\
& \lim_{n\to\infty}T_n^{(i)}=T^{(i)}\quad\quad i=1,2\label{mtl 5}\\
& T_n^{(i)}(\cup_{m=n}^\infty E_m^{(i)})=\cup_{m=n}^\infty E_m^{(i)}\quad\quad i=1,2\label{mtl 6}
\end{align}
Note that the limit in \ref{mtl 5} is taken in the weak topology. Suppose additionally there exists a sequence of metric isomorphisms $K_n:M^{(1)}/\mathcal{P}_n^{(1)}\to M^{(2)}/\mathcal{P}_n^{(2)}$ satisfying:
\begin{align}
& K_n^{-1}T_n^{(2)}|_{\mathcal{P}_n^{(2)}}K_n=T_n^{(1)}|_{\mathcal{P}_n^{(1)}}\label{mtl 7}\\
& K_{n+1}(\mathcal{P}_{n+1}^{(1)}(x))\subset K_{n}(\mathcal{P}_{n}^{(1)}(x))\quad\forall\; x\in M^{(1)}\setminus E_{n+1}^{(1)}\label{mtl 8}
\end{align}
Then the automorphisms $T^{(1)}$ and $T^{(2)}$ are metrically isomorphic.
\end{lemma}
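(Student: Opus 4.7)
The overall plan is to mimic the proof of Lemma \ref{4.1}, defining the required measure-theoretic isomorphism as a nested intersection of images of partition atoms, but carefully restricting to a full-measure subset on which the non-monotonicity of $\{\mathcal{P}_n^{(1)}\}$ can be ignored.

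Since $\sum_n \e_n < \infty$, the Borel--Cantelli estimate combined with \ref{mtl 1} gives $\mu^{(i)}(\limsup_n E_n^{(i)}) = 0$ for $i=1,2$. Set $G^{(i)} := M^{(i)} \setminus \limsup_n E_n^{(i)}$, a full-measure set on which each $x$ has an index $N(x)$ with $x \notin \bigcup_{m \geq N(x)} E_m^{(i)}$. For such $x \in G^{(1)}$ and $n \geq N(x)$, condition \ref{mtl 8} yields the genuine nesting
\begin{align*}
K_{n+1}(\mathcal{P}_{n+1}^{(1)}(x)) \subset K_n(\mathcal{P}_n^{(1)}(x)),
\end{align*}
so $A_n(x) := K_n(\mathcal{P}_n^{(1)}(x))$ is a decreasing sequence of atoms of $\mathcal{P}_n^{(2)}$. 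Since $\{\mathcal{P}_n^{(2)}\}$ is itself monotonic and generating by \ref{mtl 3}, the atoms $\mathcal{P}_n^{(2)}(y)$ shrink in measure to $\{y\}$ for a.e. $y$; combined with the nesting of $A_n(x)$, the same argument as in Lemma \ref{4.1} shows that $\bigcap_n A_n(x)$ is a single point of $M^{(2)}$ for $\mu^{(1)}$-a.e.\ $x \in G^{(1)}$. Define $K(x)$ to be this unique point, mirroring \ref{K(x)}.

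Next I would verify that $K$ is a measure-preserving isomorphism between the relevant full-measure subsets. Measurability follows since the preimage of any atom of $\mathcal{P}_m^{(2)}$ agrees modulo null sets with a union of atoms of $\mathcal{P}_m^{(1)}$ via $K_m$, and the partitions generate the whole $\sigma$-algebra; measure-preservation is inherited from $\mu^{(2)}(K_n(A)) = \mu^{(1)}(A)$ at each finite level. The intertwining $K \circ T^{(1)} = T^{(2)} \circ K$ is then derived from \ref{mtl 7}: at finite level, $K_n(\mathcal{P}_n^{(1)}(T_n^{(1)} x)) = T_n^{(2)}(A_n(x))$, and combining this with the weak convergence \ref{mtl 5} and the invariance \ref{mtl 6} lets the identity pass to the limit.

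The main obstacle I anticipate is this last step: because the $T_n^{(i)}$ converge only weakly and $K$ is defined only on $G^{(1)}$, one must use \ref{mtl 6} to ensure iteration by $T^{(i)}$ keeps us in the good set almost everywhere. The invariance $T_n^{(i)}(\bigcup_{m \geq n} E_m^{(i)}) = \bigcup_{m \geq n} E_m^{(i)}$ is precisely the property that makes the complement $G^{(i)}$ effectively $T^{(i)}$-invariant in the limit, so that the quotient-level intertwining \ref{mtl 7} lifts to the pointwise identity $K \circ T^{(1)} = T^{(2)} \circ K$ almost everywhere. This delicate compatibility between the $E_n^{(i)}$-exceptional structure and the $T_n^{(i)}$-dynamics is the subtle point where the present lemma genuinely departs from Lemma \ref{4.1}.
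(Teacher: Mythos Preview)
Your approach is correct in spirit and shares the paper's core idea: use the summability of $\e_n$ and Borel--Cantelli to isolate a full-measure set on which the non-monotonicity of $\{\mathcal{P}_n^{(1)}\}$ disappears, then define $K$ by the nested-intersection formula \ref{K(x)}. The paper, however, organizes this differently and more modularly. Rather than working directly on $G^{(1)}=M^{(1)}\setminus\limsup_n E_n^{(1)}$ and redoing the verification of bijectivity and intertwining from scratch, the paper stratifies $G^{(i)}=\bigcup_N M^{(i)}_N$ with $M^{(i)}_N:=M^{(i)}\setminus\bigcup_{m\ge N}E^{(i)}_m$, and on each fixed level $M^{(i)}_N$ the restricted partitions $\mathcal{P}^{(i)}_{N,k}:=\mathcal{P}^{(i)}_{N+k}\cap M^{(i)}_N$ are genuinely monotonic. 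Condition \ref{mtl 6} is used precisely to guarantee that $T^{(i)}_{N+k}$ restricts to an automorphism of $M^{(i)}_N$, so Lemma~\ref{4.1} applies verbatim and produces a metric isomorphism $K_{(N)}:M^{(1)}_N\to M^{(2)}_N$ that already intertwines the limits $T^{(i)}|_{M^{(i)}_N}$. One then checks $K_{(N+1)}|_{M^{(1)}_N}=K_{(N)}$ and glues.

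What this buys the paper is exactly the step you flag as the ``main obstacle'': by invoking Lemma~\ref{4.1} as a black box on each $M^{(i)}_N$, the intertwining $K_{(N)}^{-1}T^{(2)}K_{(N)}=T^{(1)}$ and the invertibility of $K_{(N)}$ come for free, with no separate argument about weak limits or about $T^{(i)}$ preserving the good set. In your direct construction you would still owe a careful proof that $K$ is essentially bijective (you only sketch the forward direction) and that the quotient-level identity \ref{mtl 7} survives the weak limit; both are true, but the stratified reduction to Lemma~\ref{4.1} is the cleaner way to discharge them.
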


\begin{proof}
Put $F^{(i)}_N:=\cup_{n=N}^\infty E_{n}^{(i)}$. Consider the sequence of Lebesgue spaces $M^{(i)}_N:=M^{(i)}\setminus F^{(i)}_N$.

\vspace{2mm}

\noindent \emph{Claim 1: $\exists$ a metric isomorphism $K_{(N)}:M^{(1)}_N\to M^{(2)}_N$, satisfying $K_{(N)}^{-1}T^{(2)}|_{M_N^{(2)}}K_{(N)}=T^{(1)}|_{M_N^{(1)}}$.}

We define $\mathcal{P}^{(i)}_{N,k}$, a finite measurable partition of $M^{(i)}_N$ by $\mathcal{P}^{(i)}_{N,k}(x):=\mathcal{P}^{(i)}_{N+k}(x)\setminus F^{(i)}_N$. We note that the sequence of partition $\{\mathcal{P}^{(i)}_{N,k}\}_{k}$ is generating because $\{\mathcal{P}^{(i)}_{k}\}_k$ is generating. Additionally, condition \ref{mtl 2} makes $\{\mathcal{P}^{(i)}_{N,k}\}_k$ a monotonic sequence of partition. We define $K_{N,k}(\mathcal{P}^{(1)}_{N,k}(x)):=K_{N+k}(\mathcal{P}^{(1)}_{N+k}(x))\setminus F^{(2)}_{N}$. 
With this definition we claim that $K_{N,k+1}(\mathcal{P}^{(1)}_{N,k}(x))=K_{N,k}(\mathcal{P}^{(1)}_{N,k}(x))$ $\forall x\in M^{(1)}_N$. (Indeed, from \ref{mtl 8} we get for a.e. $x\in M^{(1)}_N$, $K_{N,k+1}(\mathcal{P}^{(1)}_{N,k+1}(x))=K_{N+k+1}(\mathcal{P}^{(1)}_{N+k+1}(x))\setminus F^{(2)}_{N}\subset K_{N+k}(\mathcal{P}^{(1)}_{N+k}(x))\setminus F^{(2)}_N=K_{N,k}(\mathcal{P}^{(1)}_{N,k}(x))$. This with the fact that $K_{N,k+1}(\mathcal{P}^{(1)}_{N,k+1}(x))\in \mathcal{P}^{(2)}_{N,k+1}$ and $\{\mathcal{P}^{(2)}_{N,k}\}_k$ is a monotonic sequence of partitions helps us in concluding the claim). 
Observe that \ref{mtl 4}, \ref{mtl 6} and \ref{mtl 7} guarantees that $K_{N,k}^{-1}T_{N+k}^{(2)}|_{\mathcal{P}_{N,k}^{(2)}}K_{N,k}=T_{N+k}^{(1)}|_{\mathcal{P}_{N,k}^{(1)}}$. So we can apply Lemma \ref{4.1} to guarantee the existence of a metric isomorphism $K_{(N)}:M^{(1)}_N\to M^{(2)}_N$ defined for a.e. $x\in M^{(1)}_N$ by $K_{(N)}(x)=\cap^\infty_{k=1} K_{N,k}(\mathcal{P}^{(1)}_{N,k}(x))$. This finishes claim 1. 

\vspace{2mm}

\noindent\emph{Claim 2: $K_{(N+1)}(x)=K_{N}(x)$ for a.e. $x\in M^{(1)}_{N}$}

Follows from the definition of $K_{(N)}$. Indeed, note that $K_{(N+1)}(x)=\cap^\infty_{k=1} K_{N+1,k}(\mathcal{P}^{(1)}_{N+1,k}(x))=\cap^\infty_{k=1}K_{N+k+1}(\mathcal{P}^{(1)}_{N+k+1}(x))\setminus F^{(2)}_{N+1}=\cap^\infty_{k=0} K_{N+k+1}(\mathcal{P}^{(1)}_{N+k+1}(x))\setminus F^{(2)}_{N+1}$. The last equality follows from \ref{mtl 8}.

\vspace{2mm}

\noindent\emph{Claim 3: There exists a metric isomorphism $K:M^{(1)}\to M^{(2)}$ satisfying $K^{-1} T^{(2)}  K=T^{(1)}$}

Note that condition \ref{mtl 1} implies that a.e. $x\in E_n^{(1)}$ for at most finitely many $n$. Indeed, if $E=\{x:x\in E_n\text{ for infinitely many } n\}$, then $E\subset\cap_{n=m}^\infty E_n\;\forall\; m$ and $\lim_{m\to\infty}\mu^{(1)}(\cap_{n=m}^\infty E_n)=0$. So we can define for a.e. $x\in M^{(1)}$, $K(x):=K_{(N)}(x)$ if $x\in M^{(1)}_N$. Now claim 3 easily follows from claim 2.
\end{proof}

%%%%%%%%%%%%%%%%%%%%%%%%%%%%%%%%%%%%%%%%%%%%%%%%%%%%%%%%%%%%%%%%%%%%%%%%%%%%%%%%%%%%

\section{Construction of $h_{n+1}$ and the conjugating analytic diffeomorphism}\label{hn}

As mentioned earlier, the construction in an inductive process so we assume that we have already defined $h_1,\ldots ,h_n\in\text{ Diff }^\omega_\rho(\T^2)$, two sequences of integers $\{k_m\}_{m=0}^{n-1}$ and $\{l_m\}_{m=0}^{n-1}$ and a sequence of rational numbers $\{\a_m\}_{m=1}^n$. At the $n+1$ th stage we construct  $h_{n+1}\in\text{ Diff }^\omega_\rho(\T^2)$ as a composition of four different diffeomorphisms. 

First, we let $l_n$ to be a large enough integer so that in particular we have 
\begin{align}\label{ln}
l_n>2^n\|DH_n\|_0
\end{align}
We will use this parameter to ensure that a sequence of partitions we construct later is generating. 

At this point we continue with our construction by assuming $k_n$ to be any integer. Later we will impose more conditions on the largeness of $k_n$ (see \ref{k_n restriction}) to ensure that $\a_{n+1}$ is sufficiently close to $\a_n$ which will be required to show the convergence of $T_n$. Largeness of $\{k_n\}$ will also be during the proof of unique ergodicity of $T$ (see proof of proposition \ref{longest}). So later we will choose $k_n$ to be larger than the maximum of these two requirements.

The key idea which we use here for constructing the diffeomorphism $h_{n+1}$ is from \cite{BK}. We observe that using functions of the form $(x_1,x_2)\mapsto (x_1+\tilde{\psi}_1(x_2),x_2)$ and $(x_1,x_2)\mapsto (x_1,x_2+\tilde{\psi}_2(x_1))$, where $\tilde{\psi}_i$s are carefully chosen step functions, we can ``break" a partition of $\T^2$ which is permuted cyclically by $\phi^{\a_{n+1}}$ and reform it into a generating partition that is permuted cyclically by $T_{n+1}$. One can imagine that this construction is similar to playing the sliding puzzle game called ``mystic square" or ``game of 15".  

\subsubsection*{Construction of a discontinuous version of the conjugating diffeomorphism}

\begin{figure}
\centering
\begin{tabular}{c c}
 & \raisebox{-2cm}{\includegraphics[scale=.1]{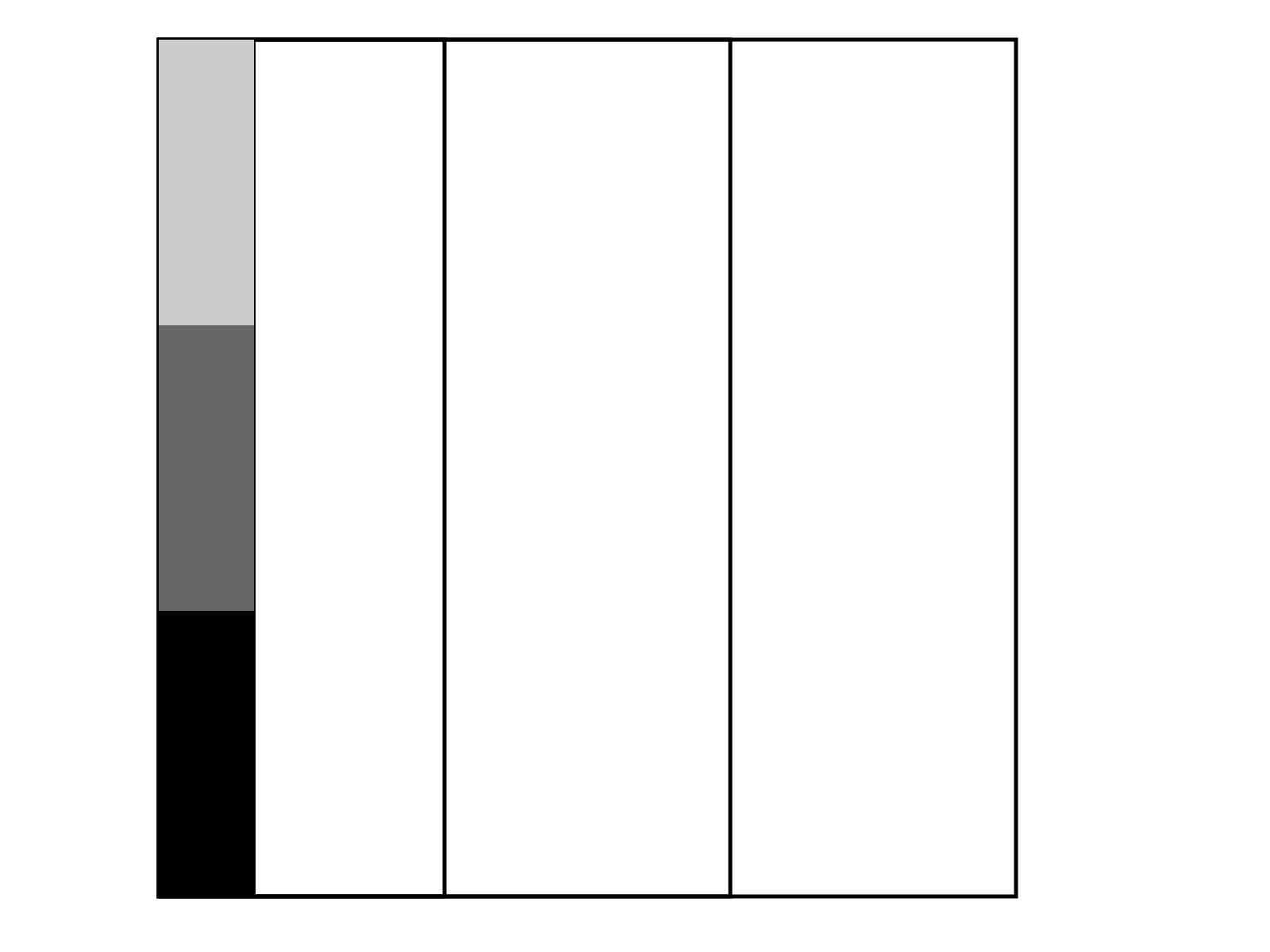}} $\xrightarrow{\tilde{h}_{1,n+1}}$ \raisebox{-2cm}{\includegraphics[scale=.1]{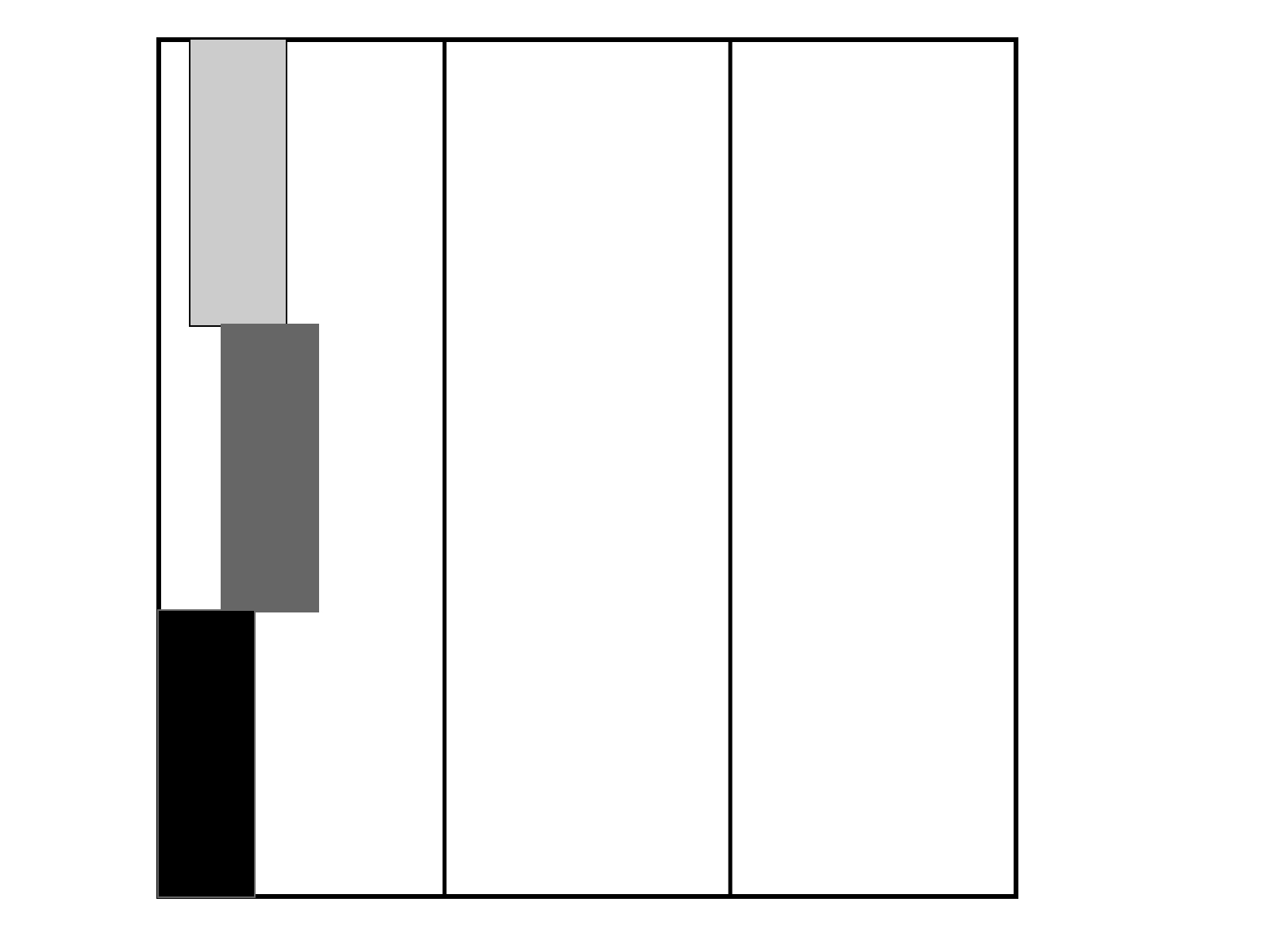}}\\
& \\
& \\
$\xrightarrow{\tilde{h}_{2,n+1}}$ & \raisebox{-2cm}{\includegraphics[scale=.1]{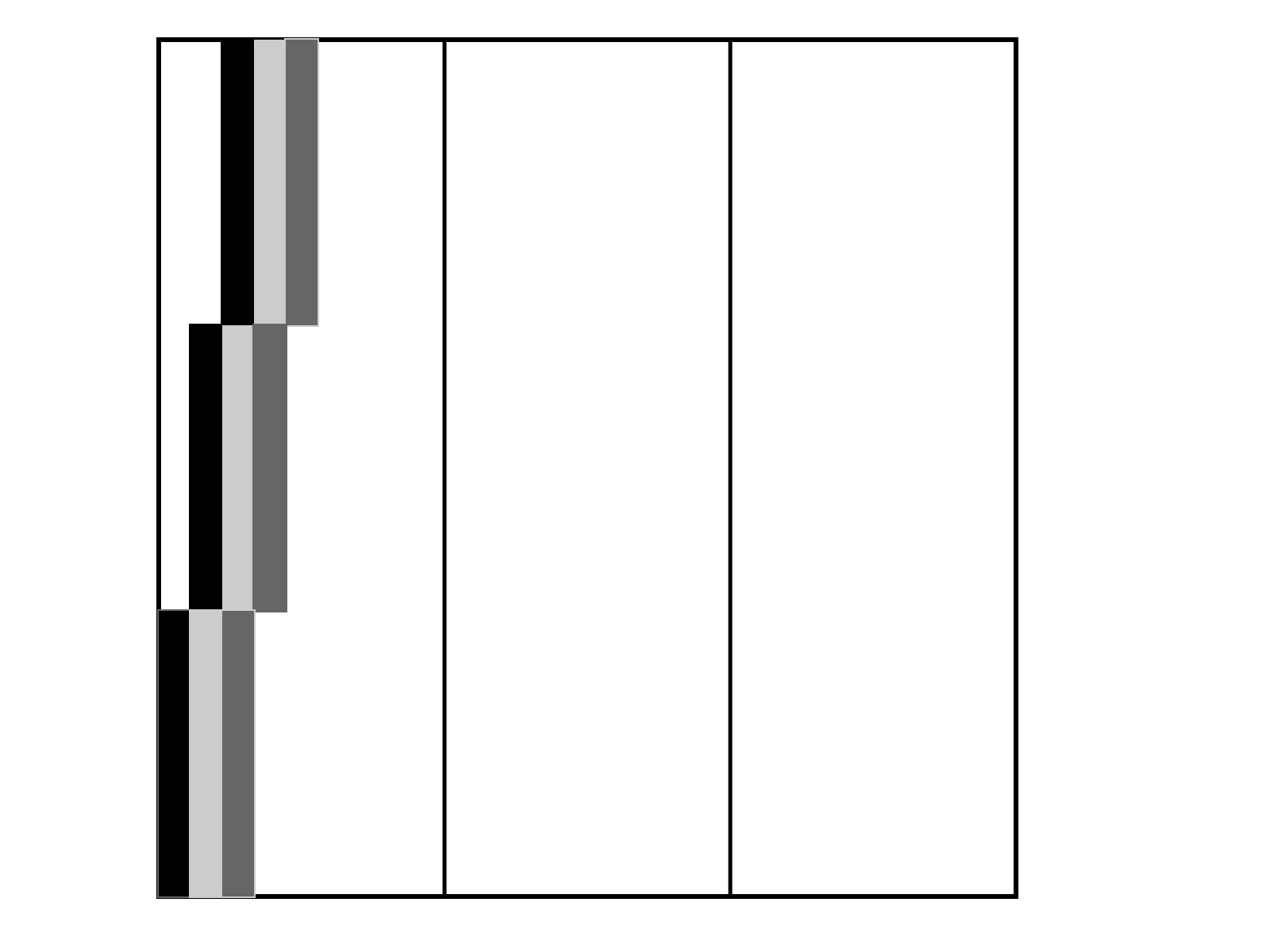}} $\xrightarrow{\tilde{h}_{3,n+1}}$ \raisebox{-2cm}{\includegraphics[scale=.1]{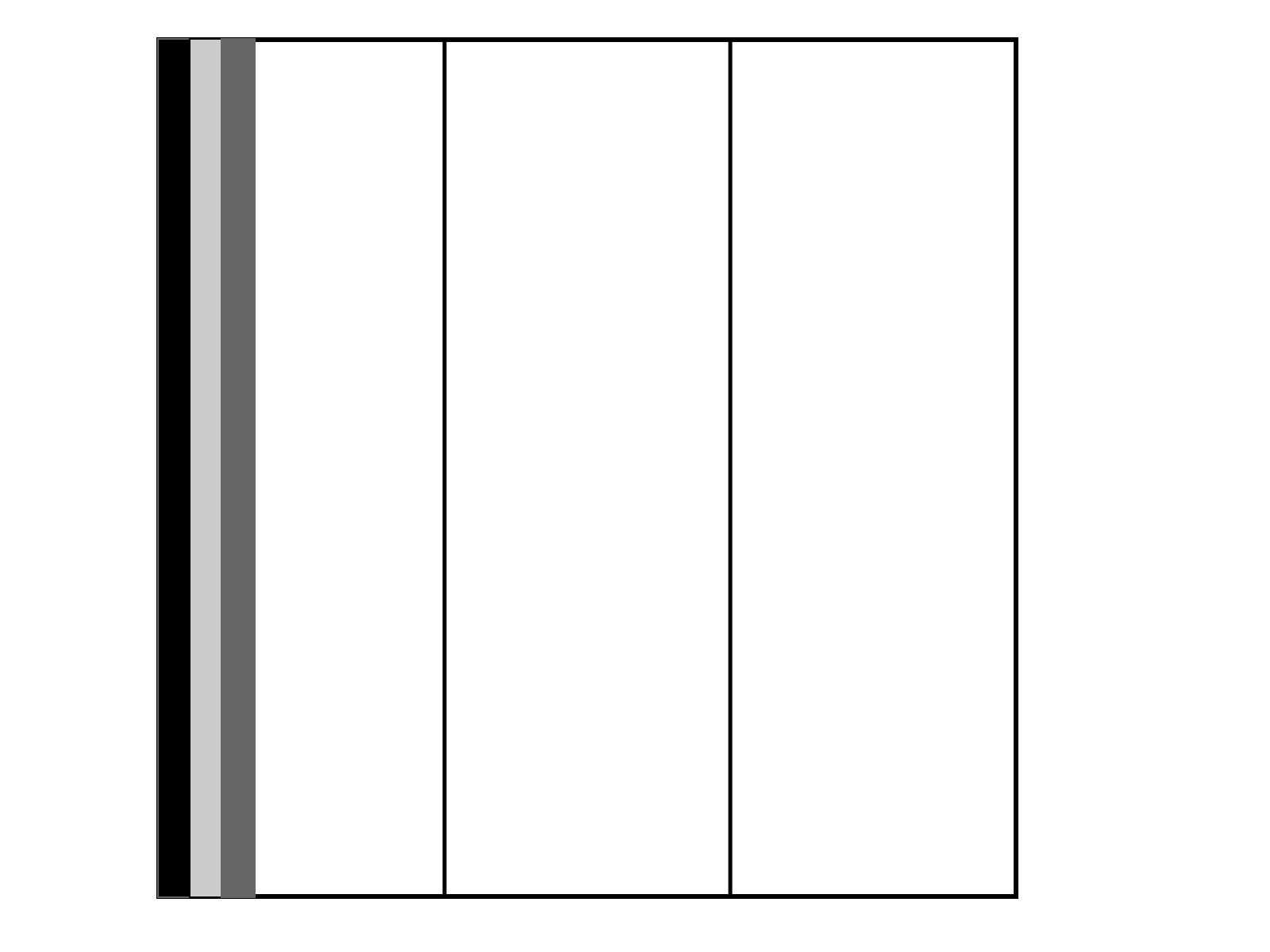}}\\
& \\
& \\
$\xrightarrow{\tilde{h}_{4,n+1}}$ & \raisebox{-2cm}{\includegraphics[scale=.1]{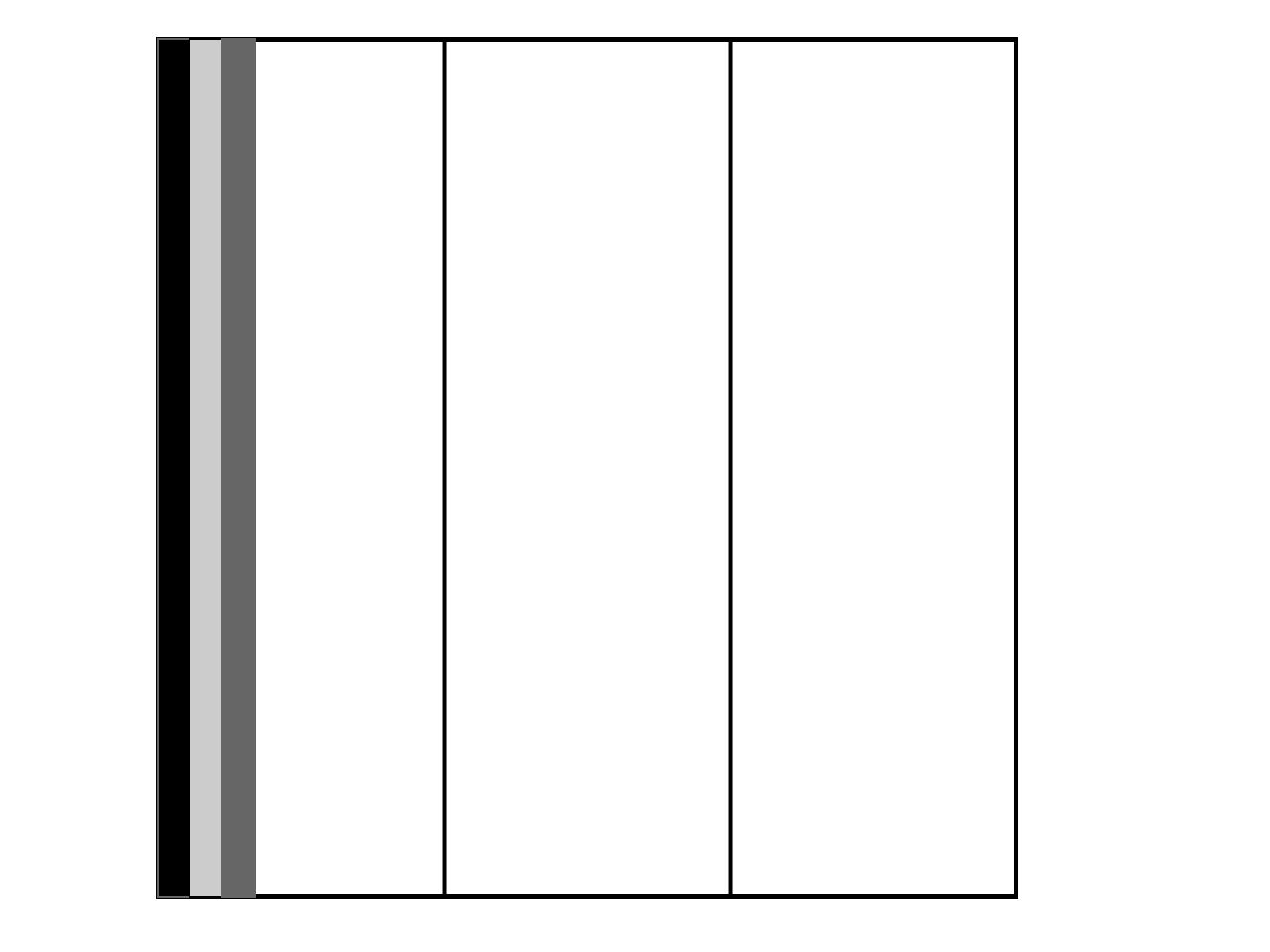}} $\qquad\quad $ \raisebox{-2cm}{\includegraphics[scale=.1]{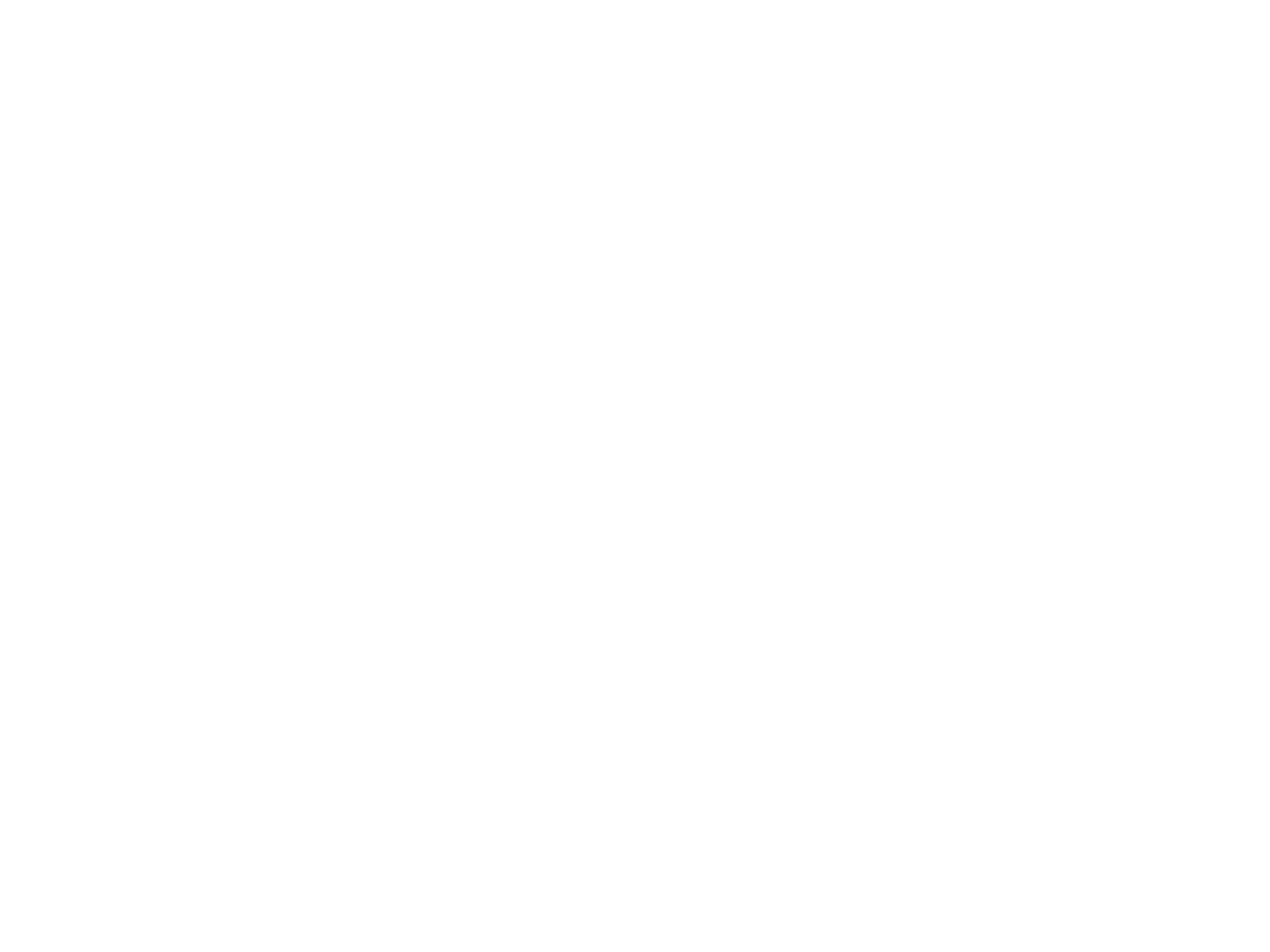}}\\
& \\
&
\end{tabular}
\caption{Illustration of the action of $\tilde{h}_{n+1}$ on three vertically stacked up atoms of $\mathcal{G}_{l,q}$  and how it is transformed into three atoms of $\mathcal{T}_q$ by $\tilde{h}_{n+1}$. This diagram is drawn with $q_n=3$ and $l_n=3$. Note that the choice of $k_n$ and $s_n$ has not been used for this illustration since they do not alter the combinatorics in any way. Also one should note that the last map $\tilde{h}_{4,n+1}$ preserves the atoms of $\mathcal{T}_{q}$, so combinatorially it is unnecessary.}
\label{combinatorics}
\end{figure}

Here our goal is to construct a discontinuous map $\tilde{h}_{n+1}:\T^2\to\T^2$ which ``breaks" down a partition of $\T^2$ and rearranges it into a generating partition. Later in this section, we will see that we can approximate this function by real-analytic diffeomorphisms.

Let us consider the following three step functions:
\begin{align}
&\tilde{\psi}_{1,n+1}:[0,1)\to \R &\text{ defined by}\quad &\tilde{\psi}_{1,n+1}(x)=\sum_{i=1}^{l_n-1}\frac{l_n-i}{l_n^2q_n}\chi_{[\frac{i}{l_n},\frac{i+1}{l_n}]}(x)\nonumber\\
&\tilde{\psi}_{2,n+1}:[0,1)\to \R &\text{ defined by}\quad &\tilde{\psi}_{2,n+1}(x)=\sum_{i=0}^{l_n^2q_n-1}\(\frac{i}{l_n}-\Big\lfloor\frac{i}{l_n}\Big\rfloor\)\chi_{[\frac{i}{l_n^2q_n},\frac{i+1}{l_n^2q_n}]}(x)\nonumber\\
&\tilde{\psi}_{3,n+1}:[0,1)\to \R &\text{ defined by}\quad &\tilde{\psi}_{3,n+1}(x)=\sum_{i=0}^{l_n-1}\frac{i}{l_n^2q_n}\chi_{[\frac{i}{l_n},\frac{i+1}{l_n}]}(x)\label{tilde psi}
\end{align}
Now we are ready to define the required map $\tilde{h}_{n+1}$ as the composition of four different maps:
\begin{align}
\tilde{h}_{n+1}:=\tilde{h}_{4,n+1}\circ \tilde{h}_{3,n+1}\circ \tilde{h}_{2,n+1}\circ \tilde{h}_{1,n+1}
\end{align}
Here the maps $\tilde{h}_{i,n+1}:\T^2\to\T^2$ for $i=1,2,3$ and $4$ are defined as follows:
\begin{align}
\tilde{h}_{1,n+1}((x_1,x_2))&=(x_1+\tilde{\psi}_{1,n+1}(x_2) \mod 1\; ,\; x_2)\nonumber\\
\tilde{h}_{2,n+1}((x_1,x_2))&=(x_1\; ,\; x_2+\tilde{\psi}_{2,n+1}(x_1) \mod 1)\nonumber\\
\tilde{h}_{3,n+1}((x_1,x_2))&=(x_1-\tilde{\psi}_{3,n+1}(x_2) \mod 1\; ,\; x_2)\nonumber\\
\tilde{h}_{4,n+1}((x_1,x_2))&:=(x_1,\; x_2+q_nx_2 \mod 1)
\end{align}

In order to properly understand the action of $\tilde{h}_{n+1}$ on $\T^2$, we look at the following two partitions of $\T^2$:
\begin{align}
\mathcal{T}_{q}&=\big\{\Delta_{i,q}:=[\frac{i}{q},\frac{i+1}{q})\times [0,1): i=0,1,\ldots, q-1\big\}\label{T}\\
\mathcal{G}_{l,q}&=\big\{[\frac{i}{lq},\frac{i+1}{lq})\times [\frac{j}{l},\frac{j+1}{l}): i=0,1,\ldots lq-1,\;\; j=0,1,\ldots l-1\big\}\label{G}
\end{align}
Now the action of $\tilde{h}_{n+1}^{-1}$ is clear. Its job is to break the atoms of the partition $\mathcal{T}_{l_n^2q_n}$ into $l_n$ pieces and rearrange those broken pieces into the atoms of the partition $\mathcal{G}_{l_n,q_n}$. Note that the diameter of the atoms of partition $\mathcal{G}_{l_n,q_n}$ is bounded by a small number, namely $\frac{1}{l}$. See figure \ref{combinatorics} for an elementary demonstration of this combinatorics at work. 

Of course $\tilde{h}_{n+1}$ is not analytic and in fact it is not even continuous, so we need to construct $h_{n+1}$, an entire diffeomorphism `close' to $\tilde{h}_{n+1}$. At this point we need to remind ourself that this will be possible because we can approximate step functions by real-analytic functions which extends holomorphically to the entire complex plane as an entire function. This $h_{n+1}$ will somewhat successfully emulate the combinatorial picture produced by $\tilde{h}_{n+1}$ i.e. $h_{n+1}^{-1}$ successfully compresses the bulk of the measure of an element of $\mathcal{T}_{q_{n+1}}$ into a set of small diameter but one needs to remember that there is an `error' set (see \ref{error set}) of small measure where we loose control.

\subsubsection*{Construction of the real-analytic conjugating diffeomorphism}

\begin{figure}
\centering
\begin{tabular}{c c c c}
\includegraphics[scale=.26]{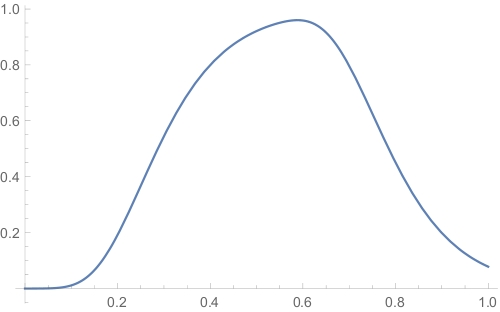} & \includegraphics[scale=.26]{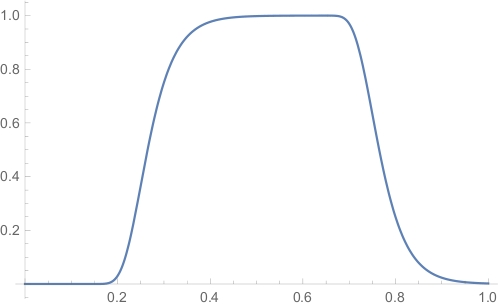} & \includegraphics[scale=.26]{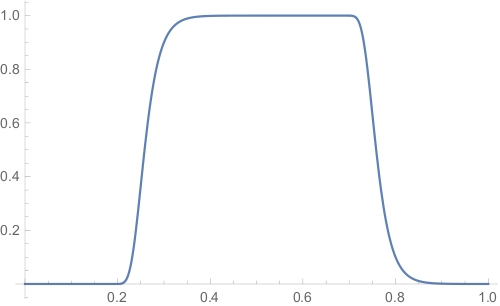} & \includegraphics[scale=.26]{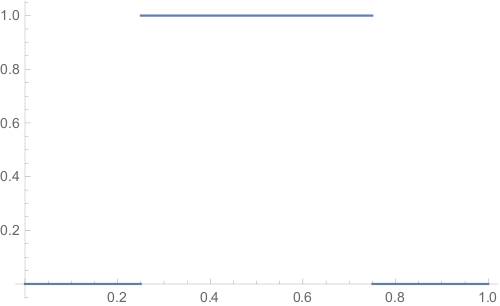}
\end{tabular}
\caption{Illustration (using mathematica) of approximation of a step function $\chi_{[\frac{1}{4},\frac{3}{4}]}$ using the function $s$. Here the first three pictures depict the function $s(x):=\exp(-\exp(-A(x-\frac{1}{4})))-\exp(-\exp(-A(x-\frac{3}{4})))$ is depicted with $A=10$, $A=25$ and $A=45$. We should also note that $s$ here is not periodic. In Lemma \ref{approx}, we use an infinite sum of such functions to obtain periodicity.}
\label{approximation of step functions}
\end{figure}

We need the following lemma about approximation of step functions by real-analytic functions. This will allow us to construct an analytic $h_{n+1}$ `close' to the discontinuous map $\tilde{h}_{n+1}$ obtained by using step functions. The approximation used in the lemma is illustrated in Figure \ref{approximation of step functions}.

\begin{lemma}\label{approx}
Let $k$ and $N$ be two positive integer and $\a=(\a_0,\ldots,\a_{k-1})\in [0,1)^k$. Consider a step function of the form 
\begin{align*}
\tilde{s}_{\a,N}:[0,1)\to \R\quad\text{ defined by}\quad \tilde{s}_{\a,N}(x)=\sum_{i=0}^{kN-1}\tilde{\a}_i\chi_{[\frac{i}{kN},\frac{i+1}{kN})}(x)
\end{align*}
Here $\tilde{\a}_i:=\a_j$ where $j:=i\mod k$. Then, given any $\e>0$ and $\d>0$, there exists a $\frac{1}{N}$-periodic entire function $s_{\a,N}:\R\to\R$ satisfying 
\begin{align}\label{nearness}
\sup_{x\in[0,1)\setminus F}|s_{\a,N}(x)-\tilde{s}_{\a,N}(x)|<\e
\end{align}
Where $F=\cup_{i=0}^{kN-1}I_i\subset [0,1)$ is a union of intervals centred around $\frac{i}{kN},\;i=1,\ldots, kN-1$ and $I_0=[0,\frac{\d}{2kN}]\cup[1-\frac{\d}{2kN},1)$ and $\l(I_i)=\frac{\d}{kN}\;\forall\; i$.
\end{lemma}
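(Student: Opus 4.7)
My plan is to construct $s_{\a,N}$ as an infinite sum of shifted Heaviside approximations, exploiting the telescoping structure highlighted in Figure~\ref{approximation of step functions}. The building block is the Gumbel function $G_A(x) := \exp(-\exp(-Ax))$, which is entire in $x$, real-valued on $\R$, satisfies $G_A(x) \to 0$ doubly-exponentially as $x \to -\infty$, and $G_A(x) \to 1$ with $1 - G_A(x) \sim e^{-Ax}$ as $x \to +\infty$. Thus for $A$ large, $G_A$ is pointwise close to the Heaviside step $\chi_{[0,\infty)}$ outside any fixed neighborhood of the origin, and this is what drives the approximation.

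The jumps of $\tilde{s}_{\a,N}$ are located at $c_{n,i} := n/N + i/(kN)$ for $n \in \Z$ and $i = 0, 1, \ldots, k-1$, with jump heights $\b_i := \a_i - \a_{(i-1) \bmod k}$ (using the cyclic convention $\a_{-1} := \a_{k-1}$). The key observation is the identity $\sum_{i=0}^{k-1} \b_i = 0$, which encodes the $1/N$-periodicity. Guided by this, I would define
\[
s_{\a,N}(x) \;:=\; C_0 \;+\; \sum_{n \in \Z}\sum_{i=0}^{k-1} \b_i\, G_A\bigl(x - c_{n,i}\bigr),
\]
where $C_0 \in \R$ is chosen so that the mean of $s_{\a,N}$ on one period agrees with that of $\tilde{s}_{\a,N}$. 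Periodicity with period $1/N$ is immediate since $x \mapsto x + 1/N$ reindexes the sum by $n \mapsto n-1$, and real-valuedness on $\R$ is manifest.

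The next step is to verify convergence and the inequality \eqref{nearness}. For $n \to -\infty$, I expand $G_A(x - c_{n,i}) = 1 - e^{-A(x-c_{n,i})} + O(e^{-2A(x-c_{n,i})})$; the leading $1$ is killed period-by-period by $\sum_i \b_i = 0$, leaving terms of order $e^{An/N}$, which are geometrically summable. For $n \to +\infty$ the shifts $x - c_{n,i} \to -\infty$ and the Gumbel factors themselves decay doubly-exponentially. Together these produce a well-defined real-analytic function. Estimate \eqref{nearness} then follows by comparing term-by-term with the analogous decomposition $\tilde{s}_{\a,N} = C_0 + \sum_{n,i} \b_i \chi_{[c_{n,i},\infty)}$: at $x \in [0,1) \setminus F$ every jump is at distance at least $\d/(2kN)$ from $x$, so each $|G_A(x - c_{n,i}) - \chi_{[c_{n,i},\infty)}(x)|$ is $O(e^{-A\d/(2kN)})$ for the nearby $O(kN)$ jumps and is negligible (by the tail bounds) for the far ones. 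Summing gives a total error $\lesssim kN\, e^{-A\d/(2kN)}$, which is $< \e$ once $A$ is chosen large enough in terms of $k, N, \d, \e$.

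The main obstacle is the complex-analytic control required to upgrade real-analyticity on $\R$ to the full entire/holomorphic-on-$\Omega_\rho$ property (which is what is actually used in the rest of the paper when bounding $\|\cdot\|_\rho$). On $\R$ the cancellation $\sum_i \b_i = 0$ tames the infinite sum very comfortably, but off the real line each individual $G_A(z)$ can grow doubly-exponentially in certain transverse imaginary directions, so one must verify that the period-by-period cancellation survives uniformly on the complex neighborhoods of interest. This is the technical heart of the argument: it relies essentially on the specific exponential-in-exponential structure of $G_A$, which propagates the cancellation across the imaginary strip. Once this estimate is in hand, the remaining bookkeeping --- including the matching of the bad set $F$ with the small symmetric interval $[0,\d/(2kN)] \cup [1-\d/(2kN),1)$ at the boundary of $[0,1)$, which arises naturally from centering excluded intervals at each jump of the periodic extension --- is routine.
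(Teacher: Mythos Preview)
Your construction is essentially identical to the paper's: both use the Gumbel block $G_A(x)=e^{-e^{-Ax}}$ summed over all integer period-shifts, and your jump-height form $C_0+\sum_{n,i}\beta_i\,G_A(x-c_{n,i})$ is simply the Abel-summation rearrangement of the paper's bump form $\sum_{n,i}\alpha_i\bigl[G_A(x-c_{n,i})-G_A(x-c_{n,i+1})\bigr]$, the two differing only by the constant $C_0=\alpha_{k-1}$. The paper's proof is in fact far terser than yours --- it writes the formula, asserts periodicity and entireness, and says that choosing $A$ large yields \eqref{nearness} --- so your convergence analysis and your caution about the complex extension already go beyond what the paper verifies.
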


\begin{proof}
We define the following function:
\begin{align*}
& s_{\a,N}:\R\to\R\quad\text{ defined by }\\
& s_{\a,N}(x)=\sum_{n=-\infty}^\infty\(\sum_{i=0}^{k-1}\a_i\big(e^{-e^{-A(x-\frac{nk+i}{kN})}}- e^{-e^{-A(x-\frac{nk+i+1}{kN})}}\big)\)
\end{align*}
Note that  $s_{\a,N}$ is a $\frac{1}{N}$-periodic entire function defined on the real line. After choosing a large enough constant $A$, we can guarantee that $s_{\a,N}$ satisfies condition \ref{nearness}.
\end{proof}

%Now, consider the following three diffeomorphisms:

%\begin{align*}
%\tilde{h}_{1,n+1}((x_1,x_2))&=(x_1+\tilde{\psi}_{1,n+1}(x_2) \mod 1\; ,\; x_2)\\
%\tilde{h}_{2,n+1}((x_1,x_2))&=(x_1\; ,\; x_2+\tilde{\psi}_{2,n+1}(x_1) \mod 1)\\
%\tilde{h}_{3,n+1}((x_1,x_2))&=(x_1-\tilde{\psi}_{3,n+1}(x_2) \mod 1\; ,\; x_2)
%\end{align*}

Now we are ready to produce an entire diffeomorphism `close' to $\tilde{h}_{n+1}$. First we use Lemma \ref{approx} to define three entire functions approximating the step functions defined above as follows:
\begin{align}
&\psi_{1,n+1}:=s_{\a^{(1)},N^{(1)}}\;\text{where }\a^{(1)}_0=0,\; \a^{(1)}_i=\frac{l_n-i}{l_n^2q_n},\; i=1,\ldots k_n-1,\; N^{(1)}=1;\\
&\psi_{2,n+1}:=s_{\a^{(2)},N^{(2)}}\;\text{where }\a^{(2)}_i=\frac{i}{l_n},\; i=1,\ldots l_n-1,\; N^{(2)}=l_nq_n;\\
&\psi_{3,n+1}:=s_{\a^{(3)},N^{(3)}}\;\text{where }\a^{(3)}_i=\frac{i}{l_n^2q_n},\; i=1,\ldots l_n-1,\; N^{(3)}=1;
\end{align}
With the above definitions, and using $3\e=\d=\frac{1}{2^{l_n^2q_n}l_n^2q_n}$ in Lemma \ref{approx}, we can conclude that there exists three measurable sets $F_{i,n+1}$ for $i=1,2,3$ such that:
\begin{align}
\sup_{x\in[0,1)-F_{1,n}}|\psi_{1,n+1}(x)-\tilde{\psi}_{1,n+1}(x)|<\e\quad\quad\l(F_{1,n})<\frac{1}{2^{l_n^2q_n}l_n^2q_n}\label{error F1}\\
\sup_{x\in[0,1)-F_{2,n}}|\psi_{2,n+1}(x)-\tilde{\psi}_{2,n+1}(x)|<\e\quad\quad\l(F_{2,n})<\frac{1}{2^{l_n^2q_n}l_n^2q_n}\label{error F2}\\
\sup_{x\in[0,1)-F_{3,n}}|\psi_{3,n+1}(x)-\tilde{\psi}_{3,n+1}(x)|<\e\quad\quad\l(F_{3,n})<\frac{1}{2^{l_n^2q_n}l_n^2q_n}\label{error F3}
\end{align}
Additionally its important to notice at this point that $\psi_{2,n+1}$ is $\frac{1}{q_n}$ periodic. 

Now we are ready to define $h_{n+1}$ as the composition of four entire diffeomorphisms: 
\begin{align}\label{h_n}
h_{n+1}:=h_{4,n+1}\circ h_{3,n+1}\circ h_{2,n+1}\circ h_{1,n+1}
\end{align}
Here the diffeomorphisms $h_{i,n+1}:\T^2\to\T^2$ for $i=1,2,3$ and $4$ are defined as follows:
\begin{align}
h_{1,n+1}((x_1,x_2))&:=(x_1+\psi_{1,n+1}(x_2) \mod 1\; ,\; x_2)\nonumber\\
h_{2,n+1}((x_1,x_2))&:=(x_1\;,\; x_2+\psi_{2,n+1}(x_1) \mod 1)\nonumber\\
h_{3,n+1}((x_1,x_2))&:=(x_1-\psi_{3,n+1}(x_2) \mod 1\; ,\;x_2)\nonumber\\
h_{4,n+1}((x_1,x_2))&:=(x_1,\; x_2+q_nx_2 \mod 1)
\end{align}
Note that it follows from the periodicity of $\psi_{i,n+1}$ (which was guaranteed by lemma \ref{approx}):  
\begin{align}\label{commute}
h_{n+1}\circ\phi^{\a_n}=\phi^{\a_n}\circ h_{n+1}
\end{align}
We define our conjugating diffemorphism $H_{n+1}$ as follows:
\begin{align}
& H_{n+1}:=h_{n+1}\circ H_n
\end{align}
This completes the construction of the conjugating diffeomorphism at the $n+1$ th stage of the induction. 

We end this section by defining,
\begin{align}\label{diophantine approx}
q_{n+1}:=k_nl_n^2q_n,\quad\quad p_{n+1}:=k_nl_n^2p_n+1,\quad\quad \a_{n+1}:=\frac{p_{n+1}}{q_{n+1}}
\end{align}
Finally, we let
\begin{align}
& T_{n+1}:=H_{n+1}^{-1}\circ\phi^{\a_{n+1}}\circ H_{n+1}\label{Tn}
\end{align}

\begin{remark}
Note that 
\begin{align*}
T_{n+1} & =H_{n+1}^{-1}\circ\phi^{\a_{n+1}}\circ H_{n+1}\\
& = h_{1,n+1}^{-1}\circ h_{2,n+1}^{-1}\circ h_{3,n+1}^{-1}\circ h_{4,n+1}^{-1}\circ\phi^{\a_{n+1}}\circ h_{4,n+1}\circ h_{3,n+1}\circ h_{2,n+1}\circ h_{1,n+1}\\
& = h_{1,n+1}^{-1}\circ h_{2,n+1}^{-1}\circ \phi^{\a_{n+1}}\circ  h_{2,n+1}\circ h_{1,n+1}
\end{align*}
So $h_{3,n+1}$ and $h_{4,n+1}$ are redundant for the definition of $T_{n+1}$, but $h_{3,n+1}$ will later play an important role as a tool to ``untwist" our construction albeit with an error. $h_{4,n+1}$ will be crucial to ensure that this error set we would construct does not contain any whole orbit. This is important because we wish to conclude that our limiting diffeomorphism is uniquely ergodic and hence we do not wish to give up control over even a single orbit.
\end{remark}

%%%%%%%%%%%%%%%%%%%%%%%%%%%%%%%%%%%%%%%%%%%%%%%%%%%%%%%%%%%%%%%%%%%%%%%%%%

\section{Existence of a limiting diffeomorphism conjugate to a rotation}

The purpose of this section is to show that the sequence $T_n$ defined in \ref{Tn} connverges to some $T\in\text{Diff }^\omega_\rho(\T^2)$ for any $\rho$. In fact this $T$ will be entire.

We define the \emph{error set} $E_{q_{n+1}}\subset \T^2$ as follows:
\begin{align}\label{error set}
H_{n+1}(E_{q_{n+1}}):=E_{q_{n+1}}^{(v)}\bigcup E_{q_{n+1}}^{(d)}
\end{align}
Here
\begin{align*}
& E_{q_{n+1}}^{(v)}:=\bigcup_{i=0}^{l_n^2q_n-1}\Big[\frac{i}{l_n^2q_n}-\frac{1}{2^{l_n^2q_n}l_n^2q_n},\frac{i}{l_n^2q_n}+\frac{1}{2^{l_n^2q_n}l_n^2q_n}\Big] \times \T^1 \\
& E_{q_{n+1}}^{(d)}=h_{4,n+1}\(\bigcup_{i=0}^{l_n-1}\T^1\times \Big[\frac{i}{l_n}-\frac{1}{2^{l_n^2q_n}l_n^2q_n},\frac{i}{l_n}+\frac{1}{2^{l_n^2q_n}l_n^2q_n}\Big]\)
\end{align*}
Note that $E_{q_{n+1}}$ is the set where we do not have any control. The darkened region of figure $3$ represents how this error set is spread over $\T^2$.

Getting back to the proof, we define the following partition of $\T^2$:
\begin{align}
 \mathcal{F}_q:=\{H_{n+1}^{-1}\Delta_{i,q}:i=0,1,\ldots,q-1\}
\end{align}
We need the following proposition about some basic inclusions and estimates.

\begin{figure}\label{es}
\centering
\includegraphics[scale=.1]{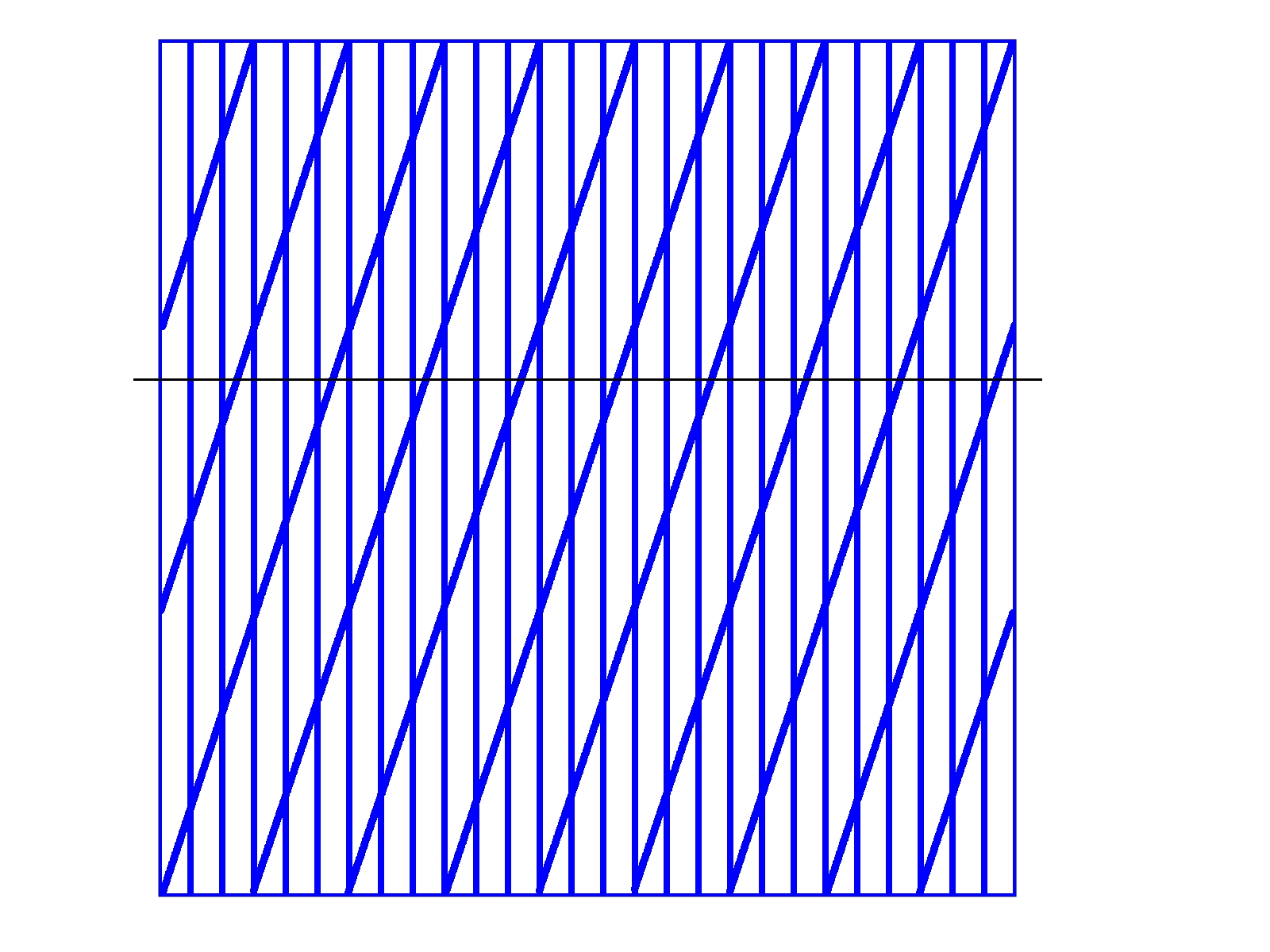} 
\caption{The darkened region represents $H_{n+1}(E_{q_{n+1}})$ and the horizontal line represents a $\phi^t$-orbit. The diagram is made with $q_n=3$ and $l_n=3$. Note that the diagonal parts corresponds to $E^{(d)}_{q_{n+1}}$ and the vertical portion corresponds to $E^{(v)}_{q_{n+1}}$.}
\end{figure}

\begin{proposition}\label{conditions}
\begin{align*}
(1)&\quad \mu(E_{q_{n+1}})\leq\frac{2}{2^{l_n^2q_n}}\\
(2)&\quad \mathcal{F}_{q_{n+1}}(x)\setminus E_{q_{n+1}}\subset \mathcal{F}_{q_n}(x)\quad\forall\; x\in\T^2\setminus E_{q_{n+1}}\\
(3)&\quad \mathcal{F}_{q_n}\to\e\text{ as } n\to\infty\qquad(\text{Here }\e\text{ is the trivial partition into points}).\\
(4)&\quad T_n\big(\mathcal{F}_{q_n}\big)=\mathcal{F}_{q_n} 
\end{align*}
\end{proposition}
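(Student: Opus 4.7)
The plan is to tackle the four claims in turn. Claims (1) and (4) are essentially direct from the definitions. For (1), since $H_{n+1}$ preserves $\mu$ and $h_{4,n+1}$ preserves Lebesgue measure, one computes $\mu(E^{(v)}_{q_{n+1}}) = l_n^2 q_n \cdot \frac{2}{2^{l_n^2 q_n} l_n^2 q_n} = \frac{2}{2^{l_n^2 q_n}}$ and $\mu(E^{(d)}_{q_{n+1}}) = \frac{2}{2^{l_n^2 q_n} l_n q_n}$, the latter being dominated by the former; subadditivity then yields the stated bound (up to a harmless constant). For (4), writing $T_n = H_n^{-1} \phi^{\a_n} H_n$ with $\a_n = p_n/q_n$ and $\mathcal{F}_{q_n} = H_n^{-1}(\mathcal{T}_{q_n})$, the rotation $\phi^{p_n/q_n}$ cyclically permutes the strips $\Delta_{i,q_n}$, hence $T_n$ cyclically permutes the atoms of $\mathcal{F}_{q_n}$.

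Claim (2) is the combinatorial heart of the proposition. The idealised $\tilde h_{n+1}^{-1}$ is exactly the inverse of the sliding puzzle in Figure \ref{combinatorics}: it sends each atom of $\mathcal{T}_{l_n^2 q_n}$ to an atom of $\mathcal{G}_{l_n, q_n}$. Since $q_{n+1} = k_n l_n^2 q_n$, each atom of $\mathcal{T}_{q_{n+1}}$ is a vertical sub-strip of some $\mathcal{T}_{l_n^2 q_n}$-atom, so $\tilde h_{n+1}^{-1}(\Delta_{i,q_{n+1}})$ lies in a sub-piece of a $\mathcal{G}_{l_n,q_n}$-atom; the horizontal extent of such a piece is $1/(l_n q_n) \leq 1/q_n$, which forces it into a single $\Delta_{j,q_n}$. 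The set $E_{q_{n+1}}$ is engineered so that outside it the analytic $\psi_{i,n+1}$ are within $\e = \frac{1}{3 \cdot 2^{l_n^2 q_n} l_n^2 q_n}$ of the step functions $\tilde\psi_{i,n+1}$: $E^{(v)}_{q_{n+1}}$ surrounds the jumps of $\tilde\psi_{2,n+1}$ in $x_1$ (at $x_1 = i/(l_n^2 q_n)$), and $E^{(d)}_{q_{n+1}}$ is the $h_{4,n+1}$-image of the jump neighborhoods of $\tilde\psi_{1,n+1}$ and $\tilde\psi_{3,n+1}$ in $x_2$ (at $x_2 = i/l_n$). Given $y \in \mathcal{F}_{q_{n+1}}(x) \setminus E_{q_{n+1}}$, I would trace $H_n(y) = h_{n+1}^{-1}(H_{n+1}(y))$ through the four factors $h_{4,n+1}^{-1}, h_{3,n+1}^{-1}, h_{2,n+1}^{-1}, h_{1,n+1}^{-1}$ in order, using \eqref{error F1}--\eqref{error F3} together with the fact that the values taken by each $\tilde\psi_{i,n+1}$ are multiples of the jump-spacing of the next (so the ideal shifts never change which jump-cell one is in) to conclude at each stage that the analytic error stays far below $1/q_n$. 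Hence $H_n(y)$ lies in the same $\Delta_{j,q_n}$ predicted by the idealisation, so $y \in \mathcal{F}_{q_n}(x)$.

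Claim (3) follows from a diameter estimate together with Borel--Cantelli. The ideal $\tilde h_n^{-1}(\Delta_{j,q_n})$ is contained in a single atom of $\mathcal{G}_{l_{n-1},q_{n-1}}$, of Euclidean diameter $O(1/l_{n-1})$; applying $H_{n-1}^{-1}$ enlarges diameters by at most $\|DH_{n-1}^{-1}\|_0$, and since $H_{n-1}$ is area preserving in dimension two ($\det DH_{n-1} = 1$ on $\Omega_\rho$ by the identity theorem) we have $\|DH_{n-1}^{-1}\|_0 \lesssim \|DH_{n-1}\|_0$. Invoking \eqref{ln} at stage $n-1$ then bounds the ideal atom's diameter by $\lesssim 2^{-(n-1)}$. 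By claim (1) the $\mu(E_{q_n})$ are summable, so Borel--Cantelli gives $\mu(\limsup_n E_{q_n}) = 0$; on the full-measure complement, iteration of (2) turns $\{\mathcal{F}_{q_n}\}$ into an eventually nested sequence whose ideal cores have vanishing diameter, and this is enough to conclude $\mathcal{F}_{q_n} \to \e$. The main obstacle I expect is the bookkeeping in (2): making the $\e$-errors of the three analytic approximations propagate cleanly through the four-fold composition without ever crossing a $\Delta_{j,q_n}$-boundary of width $1/q_n$. This is exactly what forces the choice of $\e$ in Lemma \ref{approx} and what makes the twisted form of $E^{(d)}$ (rather than a plain horizontal strip) essential.
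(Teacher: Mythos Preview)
Your proposal is correct and follows essentially the same route as the paper: (1) and (4) by direct inspection, (2) by checking that away from $E_{q_{n+1}}$ the analytic $h_{n+1}^{-1}$ tracks the discontinuous $\tilde h_{n+1}^{-1}$ closely enough to land each $\Delta_{i,q_{n+1}}$ inside a single $\Delta_{j,q_n}$, and (3) via the diameter bound $\|DH_n\|_0/l_n<2^{-n}$ on the good set together with $\mu(E_{q_{n+1}})\to 0$. Your exposition is in fact more explicit than the paper's in two places---you spell out the propagation of the $\e$-errors through the four factors in (2), and you justify replacing $\|DH_n^{-1}\|_0$ by $\|DH_n\|_0$ via $\det DH_n\equiv 1$ in dimension two---both of which the paper leaves implicit.
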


\begin{proof}
Statement (1) follows as a result of direct but obvious computation. 

To conclude statement (2), first we note that $F_{1,n}\times\T^1\subset E_{q_{n+1}}^{(v)}$, $\T^1\times F_{2,n}\subset h_{4,n+1}^{-1}E_{q_{n+1}}^{(d)}$ and $F_{3,n}\times\T^1\subset E_{q_{n+1}}^{(v)}$ (see \ref{error F1}, \ref{error F2} and \ref{error F3}).
So, for any $0\leq i<q_{n+1}$, $i'=\lfloor\frac{i}{l_n^2q_n}\rfloor$ and $j=\lfloor\frac{i}{q_n}\rfloor$, we have:
\begin{align*}
& \quad h_{n+1}^{-1}(\Delta_{i',l_n^2q_n}\setminus H_{n+1}(E_{q_{n+1}})) \subset  \Delta_{j,q_n}\\
\Rightarrow & \quad h_{n+1}^{-1}(\Delta_{i,q_{n+1}}\setminus H_{n+1}(E_{q_{n+1}})) \subset  \Delta_{j,q_n}\\
\Rightarrow & \quad H_{n+1}^{-1}(\Delta_{i,q_{n+1}}\setminus H_{n+1}(E_{q_{n+1}})) \subset H_n^{-1}(\Delta_{j,q_n})\\
\Rightarrow & \quad H_{n+1}^{-1}(\Delta_{i,q_{n+1}})\setminus E_{q_{n+1}} \subset H_n^{-1}(\Delta_{j,q_n})
\end{align*}

Statement (3) follows from the observation that 
\begin{align*}
&\quad \text{diam }(h_{n+1}^{-1}\Delta_{i,q_{n+1}}\cap (\T^2\setminus H_{n+1}(E_{q_{n+1}})))<\frac{1}{l_n}\\
\Rightarrow &\quad \text{diam }(H_{n+1}^{-1}\Delta_{i,q_{n+1}}\cap (\T^2\setminus H_n^{-1}\circ H_{n+1}(E_{q_{n+1}})))<\|DH_n\|\frac{1}{l_n}<\frac{1}{2^n}\qquad(\text{see \ref{ln}})
\end{align*}
Note that $\lim_{n\to\infty}\mu(\T^2\setminus H_n^{-1}\circ H_{n+1}(E_{q_{n+1}}))=1$.

Proof of statement (4) is obvious.
\end{proof}

Our next objective is to show that $T_n$ converges in the real-analytic topology to a real-analytic diffeomorphism $T$. We have the following proposition in that direction:

\begin{proposition}\label{conv}
\begin{align*}
T_n\to T
\end{align*}
Where $T$ is real-analytic.
\end{proposition}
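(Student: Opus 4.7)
The key observation is that the commutation relation \ref{commute}, $h_{n+1}\circ\phi^{\a_n}=\phi^{\a_n}\circ h_{n+1}$, lets us rewrite $T_n$ using $H_{n+1}$ in place of $H_n$:
\begin{align*}
T_n = H_n^{-1}\circ\phi^{\a_n}\circ H_n = H_{n+1}^{-1}\circ\bigl(h_{n+1}\circ\phi^{\a_n}\circ h_{n+1}^{-1}\bigr)\circ H_{n+1} = H_{n+1}^{-1}\circ\phi^{\a_n}\circ H_{n+1}.
\end{align*}
Consequently $T_n$ and $T_{n+1}$ are conjugates by the \emph{same} diffeomorphism $H_{n+1}$ of two translations differing only by $|\a_{n+1}-\a_n|=1/q_{n+1}=1/(k_n l_n^2 q_n)$, a quantity that by \ref{diophantine approx} can be driven to zero by choosing $k_n$ sufficiently large.

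To estimate $d_\rho(T_n,T_{n+1})$ for a fixed $\rho>0$ I would apply a mean-value inequality in a complex strip. Lemma \ref{approx} guarantees that each $\psi_{i,n+1}$ is entire, so $h_{n+1}$, and hence $H_{n+1}$, extends as an entire map $\C^2\to\C^2$; in particular $H_{n+1}(\Omega_\rho)$ is contained in some larger strip $\Omega_{\rho'}$, and $H_{n+1}^{-1}$ is holomorphic on a neighbourhood of $\R^2$ containing this image, with some derivative bound $M_{n+1}(\rho)<\infty$. Applying the mean-value inequality along the straight segment joining $\phi^{\a_n}(H_{n+1}(z))$ and $\phi^{\a_{n+1}}(H_{n+1}(z))$ yields
\begin{align*}
d_\rho(T_n,T_{n+1}) \leq M_{n+1}(\rho)\cdot|\a_{n+1}-\a_n| = \frac{M_{n+1}(\rho)}{k_n l_n^2 q_n}.
\end{align*}

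Once $H_n$, $l_n$, and the entire functions $\psi_{i,n+1}$ have been specified, the bound $M_{n+1}(\rho)$ is a finite computable quantity, and I would then require $k_n$ to be large enough that the right-hand side is at most $2^{-n}$. Summing over $n$ shows that $\{T_n\}$ is Cauchy in $\text{Diff}^\omega_\rho(\T^2,\mu)$, and by completeness of that space it converges to a real-analytic diffeomorphism $T$. Since the argument works for every $\rho>0$, I would fold a diagonal demand into the above choice of $k_n$ (say, impose the $2^{-n}$ bound simultaneously for $\rho = n$), yielding $T\in\text{Diff}^\omega_\rho(\T^2,\mu)$ for every $\rho$ and therefore entirety of $T$, as announced at the start of the section.

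The main obstacle is controlling $M_{n+1}(\rho)$. The entire approximations to the step functions $\tilde\psi_{i,n+1}$ necessarily involve a steep rise near each jump (cf.\ the parameter $A$ in the proof of Lemma \ref{approx}), so $H_{n+1}$ has potentially enormous derivatives on $\Omega_\rho$ and expands it into a much larger strip; one must verify that this strip still lies in the domain of holomorphy of $H_{n+1}^{-1}$ and quantify the resulting derivative bound. The saving grace is that $M_{n+1}(\rho)$ is fixed once the $(n+1)$-th stage is set up, whereas the Diophantine gap $|\a_{n+1}-\a_n|=1/(k_n l_n^2 q_n)$ can subsequently be made as small as needed by enlarging $k_n$, so the balance can always be tipped in favour of convergence.
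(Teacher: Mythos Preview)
Your argument is correct and follows essentially the same route as the paper: use the commutation relation \ref{commute}, reduce the difference $T_{n+1}-T_n$ via a mean-value estimate to a quantity that tends to zero as $k_n\to\infty$ (with $h_{n+1}$ already fixed), and then choose $k_n$ large enough to force $d_\rho(T_n,T_{n+1})<2^{-n}$. The only cosmetic difference is the point at which commutation is invoked: the paper factors out $H_n^{-1}$ and estimates $\|h_{n+1}^{-1}\circ\phi^{1/q_{n+1}}\circ h_{n+1}-\mathrm{Id}\|_\rho$ directly, whereas you first rewrite $T_n=H_{n+1}^{-1}\circ\phi^{\a_n}\circ H_{n+1}$ and apply the mean-value inequality to $H_{n+1}^{-1}$. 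Since each $h_{i,n+1}^{-1}$ is itself explicitly entire, $H_{n+1}^{-1}$ is entire and your bound $M_{n+1}(\rho)$ is indeed finite and independent of $k_n$; this fills the domain concern you raise in your final paragraph.
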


\begin{proof}
Here we show that if $k_n$ is chosen appropriately large enough then $\{T_n\}$ is Cauchy in $\text{Diff }^{\omega}_\rho(\T^2,\mu)$. This will allow us to conclude that $T_n$ converges to some real-analytic $T$.

Fix any $\rho>0$ (since we are dealing with entire functions, any choice of $\rho$ will work). First we note that 
\begin{align*}
&\|h_{n+1}^{-1}\circ\phi^{\frac{1}{k_nl_n^2q_n}}\circ h_{n+1}-Id\|_\rho \\
&=\max\Big\{\inf_{m\in\Z}\big\{\sup_{(z_1,z_2)\in\Omega_\rho}\{\psi_{1,n+1}(z_2)-\psi_{1,n+1}\big(z_2+\psi_{2,n+1}(z_1+\psi_{1,n+1}(z_2))\\
& \quad\quad\quad\quad\quad\quad\quad -\psi_{2,n+1}(z_1+\psi_{1,n+1}(z_2)+\frac{1}{k_nl_n^2q_n})\big)+\frac{1}{k_nl_n^2q_n}+m\}\big\},\\
& \quad\quad\quad\quad\inf_{m\in\Z}\big\{\sup_{(z_1,z_2)\in\Omega_\rho}\{\psi_{2,n+1}(z_1+\psi_1(z_2))\\
& \quad\quad\quad\quad\quad\quad\quad -\psi_2(z_1+\psi_1(z_2)+\frac{1}{k_nl_n^2q_n})+m\}\big\}\Big\}
\end{align*}
So if we choose $k_n$ to be a large enough integer, we can make the above norm arbitrarily small. More precisely, we have the following inequality:
\begin{align}\label{k_n restriction}
\|h_{n+1}^{-1}\circ\phi^\frac{1}{k_nl_n^2q_n}\circ h_{n+1}-I\|_\rho<\frac{1}{2^n\|DH_n\|_\rho}
\end{align}
We should also point out that we will make one more demand on the largeness of $k_n$ in the proof of proposition \ref{longest}. So we proceed by assuming $k_n$ to be larger than requirements imposed by both the restrictions. We complete the proof using the following computation involving the mean value theorem and estimate \ref{k_n restriction}:
\begin{align*}
\|T_{n+1}-T_n\|_\rho=&\|H_{n+1}^{-1}\circ \phi^{\a_{n+1}}\circ H_{n+1}-H_n^{-1}\circ\phi^{\a_n}\circ H_n\|_\rho\\
\leq &\|DH_n\|_\rho\|h_{n+1}^{-1}\circ\phi^{\a_{n+1}}\circ h_{n+1}-\phi^{\a_n}\|_\rho\\
= &\|DH_n\|_\rho\|h_{n+1}^{-1}\circ\phi^{\a_{n}}\circ\phi^\frac{1}{k_nl_n^2q_n}\circ h_{n+1}-\phi^{\a_n}\|_\rho\quad(\text{see }\ref{diophantine approx})\\
= &\|DH_n\|_\rho\|\phi^{\a_n}\circ h_{n+1}^{-1}\circ\phi^\frac{1}{k_nl_n^2q_n}\circ h_{n+1}-\phi^{\a_n}\|_\rho\quad(\text{see }\ref{commute})\\
= &\|DH_n\|_\rho\|h_{n+1}^{-1}\circ\phi^\frac{1}{k_nl_n^2q_n}\circ h_{n+1}-I\|_\rho\\
\leq &\frac{1}{2^n}
\end{align*}
This implies that the sequence $\{T_n\}$ Cauchy and hence converges to $T$ which is real-analytic.
\end{proof}

\subsubsection*{Existence of an ergodic diffeomorphism metrically conjugate to a circle rotation}

First we approximate an irrational rotation of the circle by rational rotations. Let $\a_n$ and be as before and consider a sequence of partitions of the circle as follows:
\begin{align*}
\mathcal{C}_{q_n}:=\Big\{\Gamma_{i,q_n}:=\Big[\frac{i}{q_n},\frac{i+1}{q_n}\Big):\; i=0,1,\ldots q_n-1\Big\}
\end{align*}
Clearly this is a sequence of partitions are monotonic and generating. We also, define a sequence of maps:
\begin{align*}
R_{\a_n}:S^1\to S^1,\quad\quad \text{ defined by }x\mapsto x+\a_n
\end{align*}
So, we have $R_{\a_n}\to R_\a$.
We also define 
\begin{align*}
\tilde{E}_{q_{n+1}}:=\bigcup_{i=0}^{q_{n+1}}\Big[\frac{i}{l_n^2q_n}-\frac{\mu(E_{q_{n+1}})}{2l_n^2q_n},\frac{i}{l_n^2q_n}+\frac{\mu(E_{q_{n+1}})}{2l_n^2q_n}\Big] 
\end{align*}

Following the notation of lemma \ref{mtl} we let $M^{(1)}:=\T^2,\mu^{(1)}:=\mu,\mathcal{P}^{(1)}_n:=\mathcal{F}_{q_n}, E^{(1)}_n:=E_{q_n}, M^{(2)}:=\T^1,\mu^{(2)}:=\l, \mathcal{P}^{(2)}_{n}:=\mathcal{C}_{q_n}$ and $E^{(2)}_n:=\tilde{E}_{q_{n+1}}$. Finally we define the conjugacy $K_n$ by $K_n(H_n^{-1}\Delta_{i,q_n})=\Gamma_{i,q_n}$. This along with proposition \ref{conditions} and proposition \ref{conv} together gives us that $\mathcal{P}^{(i)}_n$ is generating and  conditions \ref{mtl 1}, \ref{mtl 2}, \ref{mtl 4} and \ref{mtl 5} in lemma \ref{mtl}. Conditions \ref{mtl 7} and \ref{mtl 8} follows from the definition. Now note that $\phi^{\alpha_n}$ preserves $E_{q_{n+1}}^{(v)}$ and $E_{q_{n+1}}^{(d)}$ and hence $T_{n+1}$ preserves $E_{q_{n+1}}$. This gives us \ref{mtl 6} and we have essentially proved the following proposition:

\begin{proposition}
$T$ constructed above is a real-analytic diffeomorphisms of the two dimensional torus $\T^2$ preserving the Lebesgue measure and it is metrically isomorphic to an irrational rotation of the circle. 
\end{proposition}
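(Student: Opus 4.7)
The plan is to verify, one by one, every hypothesis of Lemma \ref{mtl} for the dictionary already laid down in the paragraph preceding the proposition. Real-analyticity of the limit and its $\mu$-invariance are handed to us: Proposition \ref{conv} gives $T\in\mathrm{Diff}^\omega_\rho(\T^2,\mu)$, and each $T_n$ is Lebesgue-preserving because $H_n\in\mathrm{Diff}^\omega_\rho(\T^2,\mu)$ and $\phi^{\a_n}$ is; this property passes to analytic limits. So the only remaining content is the metric isomorphism with some irrational rotation $R_\a$ of $\T^1$.

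Turning to Lemma \ref{mtl}: the generating property of $\{\mathcal{F}_{q_n}\}$ is Proposition \ref{conditions}(3), while that of $\{\mathcal{C}_{q_n}\}$ is obvious. Monotonicity of $\{\mathcal{C}_{q_n}\}$, that is condition \ref{mtl 3}, is immediate from $q_{n+1}=k_n l_n^2 q_n$. Condition \ref{mtl 2} is exactly Proposition \ref{conditions}(2). For condition \ref{mtl 1} I would take $\e_n:=\mu(E_{q_n})=\l(\tilde E_{q_n})$, with summability guaranteed by Proposition \ref{conditions}(1) and the growth built into \ref{ln}. Condition \ref{mtl 4} is Proposition \ref{conditions}(4) combined with the trivial fact that $R_{\a_n}$ permutes $\mathcal{C}_{q_n}$. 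Condition \ref{mtl 7} is the cyclic-permutation statement $\phi^{\a_n}\Delta_{i,q_n}=\Delta_{i+p_n\,(\mathrm{mod}\,q_n),\,q_n}$ together with its circle counterpart, so $K_n$ intertwines $T_n|_{\mathcal{F}_{q_n}}$ and $R_{\a_n}|_{\mathcal{C}_{q_n}}$ by design. Condition \ref{mtl 8} combines Proposition \ref{conditions}(2) with the numerical inclusion $\Delta_{i,q_{n+1}}\subset \Delta_{\lfloor i/(k_n l_n^2)\rfloor,\,q_n}$ and the analogous inclusion on the circle coming from monotonicity of $\{\mathcal{C}_{q_n}\}$.

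That leaves weak convergence \ref{mtl 5} and invariance \ref{mtl 6}. Weak convergence on the torus side is Proposition \ref{conv}; on the circle side \ref{diophantine approx} gives $|\a_{n+1}-\a_n|=1/q_{n+1}$, so $\{\a_n\}$ is Cauchy and converges to some $\a\in[0,1)$, and by enlarging $k_n$ inductively (a freedom we still have) one arranges $\a\notin\Q$. For condition \ref{mtl 6} on the torus side I would split $E_{q_{n+1}}$ into its two pieces: $E^{(v)}_{q_{n+1}}$ is a union of vertical strips whose centres $\{i/(l_n^2 q_n)\}_i$ form a set invariant under translation by $\a_n=p_n/q_n$, hence is $\phi^{\a_n}$-invariant; and $E^{(d)}_{q_{n+1}}=h_{4,n+1}(\text{horizontal strips})$ is $\phi^{\a_n}$-invariant because $\phi^{\a_n}$ commutes with $h_{4,n+1}$ (they act by the identity in complementary coordinates) and manifestly preserves the horizontal strips themselves. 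Conjugating by $H_{n+1}$ yields $T_{n+1}$-invariance of $E_{q_{n+1}}$. The same argument, much simpler, handles $\tilde E_{q_{n+1}}$ under $R_{\a_n}$.

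The main obstacle I anticipate is unpacking condition \ref{mtl 6} correctly: the twist introduced through $h_{4,n+1}$ must leave $\phi^{\a_n}$-invariance intact, and if $h_{4,n+1}$ had coupled the two coordinates the whole error-set bookkeeping would collapse. Once this is checked, Lemma \ref{mtl} applies and delivers a metric isomorphism $K:(\T^2,\mu,T)\to(\T^1,\l,R_\a)$, which is the statement of the proposition.
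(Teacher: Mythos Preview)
Your proposal is correct and takes essentially the same approach as the paper: both set up the dictionary for Lemma~\ref{mtl} and verify its hypotheses one by one, citing Proposition~\ref{conditions} for \ref{mtl 1}--\ref{mtl 4}, Proposition~\ref{conv} for \ref{mtl 5}, the definition of $K_n$ for \ref{mtl 7}--\ref{mtl 8}, and the $\phi^{\a_n}$-invariance of $E_{q_{n+1}}^{(v)}$ and $E_{q_{n+1}}^{(d)}$ for \ref{mtl 6}. You supply a bit more detail than the paper (e.g.\ the explicit reason each piece of the error set is $\phi^{\a_n}$-invariant, and the remark that irrationality of $\a$ can be arranged via the freedom in $k_n$), but the structure and substance are the same.
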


%%%%%%%%%%%%%%%%%%%%%%%%%%%%%%%%%%%%%%%%%%%%%%%%%%%%%%%%%%%%%%%%%%%%%%%%%%

\section{Proof of unique ergodicity}

Of course it follows from the previous discussion that the limiting diffeomorphism $T$ constructed above is ergodic with respect to the Lebesgue measure. We would like to show that $T$ is in fact uniquely ergodic. The proof we present here is almost identical to the one presented in section 3 of \cite{FSW}. However marginal changes needs to be made because our error set is more spread out than the one constructed in \cite{FSW}. This is crucial to ensure that we do not loose any finite orbits even though we have control over every $\T^1$ orbit. 

We need the following abstract lemma from \cite{FSW} for the proof of unique ergodicity:

\begin{lemma}\label{convcond}
Let $q_n$ be an increasing sequence of natural numbers and $T_n:(M,\mu)\to (M,\mu)$ be a sequence of transformations which converge uniformly to a transformation $T$. Suppose that for each continuous function $g$ from a dense set of continuous functions $\Phi$ there is a constant $c$ such that 
\begin{align*}
&\frac{1}{q_n}\sum_{i=0}^{q_n-1}g\circ T^i_n(x)\xrightarrow{n\to\infty} c\qquad\text{uniformly}
\end{align*}
and,
\begin{align*}
&d^{(q_n)}(T_n,T):=\max_{x\in M}\big(\max_{0\leq i\leq q_n} d(T^i_n x, T^i x)\big) \to 0
\end{align*}
Then $T$ is uniquely ergodic.
\end{lemma}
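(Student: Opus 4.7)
The plan is to verify the standard criterion for unique ergodicity: that any two $T$-invariant Borel probability measures on $M$ assign the same integral to every continuous function. The hypotheses are packaged precisely for this — convergence of $T_n$-Birkhoff averages along $q_n$ on a dense subclass, combined with a dynamical closeness $d^{(q_n)}(T_n,T)\to 0$ whose time-horizon matches the averaging window.

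First I would transfer the hypothesized convergence from $T_n$ to $T$. Fix $g\in\Phi$ and $\eta>0$; uniform continuity of $g$ on the compact space $M$ supplies $\delta>0$ with $d(x,y)<\delta\Rightarrow|g(x)-g(y)|<\eta$. For $n$ large enough that $d^{(q_n)}(T_n,T)<\delta$, one has $d(T^i x, T_n^i x)<\delta$ simultaneously for every $x\in M$ and every $0\leq i\leq q_n$, so a term-by-term comparison yields
\[
\sup_{x\in M}\left|\frac{1}{q_n}\sum_{i=0}^{q_n-1}g\circ T^i(x)-\frac{1}{q_n}\sum_{i=0}^{q_n-1}g\circ T_n^i(x)\right|<\eta.
\]
Combined with the hypothesis on the $T_n$-average, this promotes uniform-in-$x$ convergence to $c$ from $T_n$ to $T$.

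Next I would integrate against an arbitrary $T$-invariant Borel probability measure $\nu$. Invariance of $\nu$ gives $\int g\circ T^i\,d\nu=\int g\,d\nu$ for every $i$, so the uniformly convergent Birkhoff average integrates to yield
\[
\int_M g\,d\nu=\int_M \frac{1}{q_n}\sum_{i=0}^{q_n-1}g\circ T^i\,d\nu\xrightarrow{n\to\infty}c.
\]
Hence $\int g\,d\nu=c$ for every $g\in\Phi$, and density of $\Phi$ in $C(M)$ extends this equality to every continuous $g$. Riesz--Markov--Kakutani then forces any two invariant measures to coincide, while Krylov--Bogolyubov supplies at least one, so $T$ is uniquely ergodic.

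The only nontrivial step is the transfer, which needs $d(T^i x, T_n^i x)$ small \emph{uniformly in both $x$ and $i$} out to time $q_n$; this is exactly what the dynamical distance $d^{(q_n)}$ encodes. After that, the argument is soft functional analysis (density plus Riesz), so the lemma is essentially a clean repackaging of its hypotheses — the real work lies in verifying those hypotheses for the concrete $T_n$ and $q_n$ produced by the construction of the preceding sections.
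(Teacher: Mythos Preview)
The paper does not prove this lemma; it simply imports it from \cite{FSW} and uses it as a black box. Your argument is a correct, self-contained proof of the statement: the transfer step via uniform continuity and the $d^{(q_n)}$ hypothesis is exactly the point of that hypothesis, and integrating the resulting uniform limit against an arbitrary $T$-invariant probability measure is the standard route to unique ergodicity. One minor remark: you implicitly use that $M$ is a compact metric space (for uniform continuity of $g$, for Krylov--Bogolyubov, and for continuous functions to separate Borel probabilities); this is not stated in the lemma but is of course satisfied in the paper's application to $\T^2$.
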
  

We need the following very important estimate before we can prove unique ergodicity.

\begin{proposition}\label{longest}
Given any $\e>0$ and any $(\frac{1}{l_n},\e)$-uniformly continuous function\footnote{A function $g:(X,d)\to(Y,d')$ is called $(\delta,\e)$-uniformly continuous if $g(B_\delta(x))\subset B_\e(g(x))\;\;\forall x\in X$.} $g:\T^2\to\R$, we have for all $x\in \T^2$,
\begin{align*}
\quad \Big|\frac{1}{q_{n+1}}\sum_{j=0}^{q_{n+1}-1}& g\circ h_{n+1}^{-1}\circ \phi^{\frac{j}{q_{n+1}}}(x)-
\int_{\T^2}gd\mu\Big|<2\e+\frac{6\|g\|_0}{n^2}\\
\end{align*}
\end{proposition}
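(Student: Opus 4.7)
The plan is to reduce the Birkhoff sum to a Riemann sum over the partition $\mathcal{G}_{l_n,q_n}$, and then to $\int g\,d\mu$ by uniform continuity. The $q_{n+1}=k_n l_n^2 q_n$ points $\{\phi^{j/q_{n+1}}(x)\}_{j}$ are equally spaced on the horizontal circle $\T^1\times\{x_2\}$, and consecutive blocks of $k_n$ of them lie inside a common atom of $\mathcal{T}_{l_n^2 q_n}$. The combinatorial model $\tilde{h}_{n+1}^{-1}$ (Figure \ref{combinatorics}) rearranges these $l_n^2 q_n$ blocks, each sitting at the same $x_2$-level within its $\mathcal{T}_{l_n^2 q_n}$-atom, so that each atom of $\mathcal{G}_{l_n,q_n}$ receives exactly one block, i.e.\ exactly $k_n$ orbit points. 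Since every atom of $\mathcal{G}_{l_n,q_n}$ has diameter $\leq 1/l_n$ and $g$ is $(1/l_n,\e)$-uniformly continuous, the Birkhoff sum built from $\tilde{h}_{n+1}^{-1}$ differs from the Riemann sum $\frac{1}{l_n^2 q_n}\sum_{A\in\mathcal{G}_{l_n,q_n}}g(p_A)$ by at most $\e$, and that Riemann sum is within $\e$ of $\int g\,d\mu$ by a second application of uniform continuity.

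To pass from the combinatorial $\tilde{h}_{n+1}^{-1}$ to the real-analytic $h_{n+1}^{-1}$, I would invoke Lemma \ref{approx} and the estimates \ref{error F1}--\ref{error F3}: outside the error set $E_{q_{n+1}}$ of \ref{error set}, the two maps carry orbit points into the same atom of $\mathcal{G}_{l_n,q_n}$ and differ only by a sub-atom displacement that is absorbed by the $\e$-uniform continuity bound. Each orbit point whose $h_{n+1}^{-1}$-image lands in $E_{q_{n+1}}^{(v)}\cup E_{q_{n+1}}^{(d)}$ contributes at most $2\|g\|_0$ to the Birkhoff discrepancy, so the proof reduces to the combinatorial count of such ``bad'' orbit points, which must be held below a fraction of $3/n^2$.

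The bad-point count is the technical core. The vertical piece $E_{q_{n+1}}^{(v)}$ is a union of $l_n^2 q_n$ thin slabs of total $x_1$-width $2/2^{l_n^2 q_n}$; since $q_{n+1}=k_n l_n^2 q_n$ orbit points are equally spaced in $x_1$, each slab captures at most $\lceil 2k_n/2^{l_n^2 q_n}\rceil+1$ points, giving a vertical contribution of $O(l_n^2 q_n)$ bad points, i.e.\ a fraction $O(1/k_n)$. The diagonal piece $E_{q_{n+1}}^{(d)}$ is delicate, and is precisely where the map $h_{4,n+1}$ plays its decisive role: without its twisting action a horizontal $\phi$-orbit could fall inside a single slab of the error set for an adversarial choice of $x_2$, which would destroy the uniform-in-$x$ estimate needed later for unique ergodicity. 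The twist disperses the error slabs across many parallel translates and forces every horizontal orbit to intersect $E_{q_{n+1}}^{(d)}$ in only $O(l_n q_n)$ points, again a fraction $O(1/k_n)$. Choosing $k_n$ sufficiently large (this is the second largeness demand on $k_n$ noted after \ref{k_n restriction}, and $k_n\geq n^2$ suffices) pushes the combined bad-point fraction below $3/n^2$, and assembling the three error contributions $\e+\e+2\|g\|_0\cdot(3/n^2)=2\e+6\|g\|_0/n^2$ yields the claimed bound. The main obstacle is producing this last $x$-uniform count for $E_{q_{n+1}}^{(d)}$ for \emph{every} $x\in\T^2$, not just almost every.
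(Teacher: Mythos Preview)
Your proposal is correct and follows essentially the same route as the paper. The paper's proof also splits the orbit into good indices (those avoiding $E_{q_{n+1}}^{(v)}$ and $E_{q_{n+1}}^{(d)}$) and bad indices, obtains the key cardinality bound $L^{(x)}\ge q_{n+1}(1-2/k_n)$ for the good set, and then imposes $k_n>n^2$ exactly as you do. The one cosmetic difference is that the paper, instead of passing through the discrete model $\tilde h_{n+1}^{-1}$ and a Riemann sum over $\mathcal{G}_{l_n,q_n}$, works directly with $h_{n+1}^{-1}$: for each good orbit point lying in $\Delta_{i,q_{n+1}}$ it compares $g\circ h_{n+1}^{-1}$ at that point to the \emph{average} of $g\circ h_{n+1}^{-1}$ over $\Delta_{i,q_{n+1}}\cap(H_{n+1}(E_{q_{n+1}}))^c$, using the fact that this set has $h_{n+1}^{-1}$-image of diameter $<1/l_n$; summing these integrals over all atoms recovers $\int_{\T^2}g$ without a separate Riemann-sum step. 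Your two $\e$-applications (orbit point to representative, then Riemann sum to integral) are replaced in the paper by a single averaging step that yields the same $2\e$. Your account of the bad-point count, and in particular the role of $h_{4,n+1}$ in controlling $E_{q_{n+1}}^{(d)}$ uniformly in $x$, is actually more explicit than what the paper writes down.
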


\begin{proof}
Fix any $x_0\in\T^2$. Then $x=\phi^\frac{i_0}{q_{n+1}}x_0\in \Delta_{i,q_{n+1}}$ for some $i_0$ and $i$ such that $\Delta_{i,q_{n+1}}\cap E_{q_{n+1}}^{(v)}=\emptyset$ and $x\not\in E_{q_{n+1}}^{(d)}$. Then for any $y\in \Delta_{i,q_{n+1}}\cap \big(H_{n+1}(E_{q_{n+1}})\big)^c$, it follows from our construction,
\begin{align*}
&\quad d\big(h_{n+1}^{-1}(x), h_{n+1}^{-1}(y)\big)<\frac{1}{l_n}
\end{align*}
Using the $(\frac{1}{l_n},\e)$-uniform continuity of $g$,
\begin{align*}
 |g\circ h_{n+1}^{-1}(x)-g\circ h_{n+1}^{-1}(y)|<2\e
\end{align*}
After averaging over all $y\in \Delta_{i,q_{n+1}}\cap \big(H_{n+1}(E_{q_{n+1}})\big)^c$
\begin{align}\label{1234}
\Big|g & \circ h_{n+1}^{-1}(x)-
\frac{1}{\mu\big(\Delta_{i,q_{n+1}}\cap (H_{n+1}(E_{q_{n+1}}))^c\big)}\int_{h_{n+1}^{-1}\big(\Delta_{i,q_{n+1}}\cap (H_{n+1}(E_{q_{n+1}}))^c\big)} gd\mu\Big|<2\e
\end{align}
Now consider the set $\Lambda^{(x)}=\{j:\Delta_{j,q_{n+1}}\cap E_{q_{n+1}}^{(v)}=\emptyset,$ and $\phi^\frac{j}{q_{n+1}}(x)\not\in E^{(d)}_{q_{n+1}};\; 0\leq j< q_{n+1}\}$. Also let $L^{(x)}=\text{card }\{\Lambda^{(x)}\}$. We note that 
\begin{align}\label{lx estimate}
L^{(x)}\geq q_{n+1}(1-\frac{2}{k_n})
\end{align}
Getting back to \ref{1234} and summing over $\Lambda^{(x)}$ we obtain:
\begin{align*}
&\Big|\frac{1}{q_{n+1}}\sum_{j\in \Lambda^{(x)}} g\circ h_{n+1}^{-1}\circ \phi^{\frac{j}{q_{n+1}}}(x)-
\(\frac{1}{q_{n+1}}\sum_{j\in\Lambda^{(x)}}\frac{1}{\mu\big(\Delta_{j,q_{n+1}}\cap (H_{n+1}(E_{q_{n+1}}))^c\big)}\cdot\\
&\qquad\qquad\qquad\qquad\qquad\qquad\qquad\qquad\int_{h_{n+1}^{-1}\big(\Delta_{j,q_{n+1}}\cap (H_{n+1}(E_{q_{n+1}}))^c\big)} gd\mu\)\Big|<2\e
\end{align*}
Using the fact that $E_{q_{n+1}}$ is a set of small measure,
\begin{align*}
\Big|\frac{1}{q_{n+1}} & \sum_{j\in \Lambda^{(x)}} g\circ h_{n+1}^{-1}\circ \phi^{\frac{j}{q_{n+1}}}(x)-\\
&\quad\frac{1}{q_{n+1}}\sum_{j\in\Lambda^{(x)}}\frac{1}{\mu(\Delta_{j,q_{n+1}})}\int_{h_{n+1}^{-1}\big(\Delta_{j,q_{n+1}}\cap (H_{n+1}(E_{q_{n+1}}))^c\big)} gd\mu\Big|<2\e+\frac{\|g\|_0}{q_{n+1}}
\end{align*}
Using the fact that $q_{n+1}\mu(\Delta_{j,q_{n+1}})=1$,
\begin{align*}
\Big|\frac{1}{q_{n+1}}\sum_{j\in \Lambda^{(x)}} g\circ h_{n+1}^{-1}\circ \phi^{\frac{j}{q_{n+1}}}(x)-\sum_{j\in\Lambda^{(x)}}\int_{h_{n+1}^{-1}\big(\Delta_{j,q_{n+1}}\cap (H_{n+1}(E_{q_{n+1}}))^c\big)} gd\mu\Big|<2\e+\frac{\|g\|_0}{q_{n+1}}
\end{align*}
Changing the index of the second sum and compensating for the error,
\begin{align*}
\Big|\frac{1}{q_{n+1}}\sum_{j\in \Lambda^{(x)}}g\circ h_{n+1}^{-1}\circ \phi^{\frac{j}{q_{n+1}}}(x)-\sum_{j=0}^{q_{n+1}-1}&\int_{h_{n+1}^{-1}\big(\Delta_{j,q_{n+1}}\cap (H_{n+1}(E_{q_{n+1}}))^c\big)} gd\mu\Big|\\&<2\e+\frac{\|g\|_0}{q_{n+1}}+\frac{\|g\|_0(q_{n+1}-L^{(x)})}{q_{n+1}}
\end{align*}
Using estimate \ref{lx estimate} and once again observing that $E_{q_{n+1}}$ is a set of small measure (see proposition \ref{conditions}),
\begin{align*}
\Big|\frac{1}{q_{n+1}}\sum_{j\in \Lambda^{(x)}}& g\circ h_{n+1}^{-1}\circ \phi^{\frac{j}{q_{n+1}}}(x)-
\int_{\T^2}gd\mu\Big|<2\e+\frac{\|g\|_0}{q_{n+1}}+\frac{2\|g\|_0}{k_n}+\frac{2\|g\|_0}{2^{l_n^2q_n}}
\end{align*}
We note that $2^{l_n^2q_n}>n^2$. And once again using estimate \ref{lx estimate} and changing the indexing of the first sum,
\begin{align*}
\Big|\frac{1}{q_{n+1}}\sum_{j=0}^{q_{n+1}-1}& g\circ h_{n+1}^{-1}\circ \phi^{\frac{j}{q_{n+1}}}(x)-
\int_{\T^2}gd\mu\Big|<2\e+\frac{4\|g\|_0}{k_n}+\frac{2\|g\|_0}{n^2}\\
\end{align*}
The proposition follows if we assume $k_n>n^2$.
\end{proof}

\begin{proposition}
The limiting diffeomorphism $T$ obtained in proposition \ref{conv} is uniquely ergodic.
\end{proposition}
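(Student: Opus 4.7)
The strategy is to verify both hypotheses of Lemma \ref{convcond} for the sequence $\{T_{n+1}\}$ with witness times $q_{n+1}$; the dense test space will be $\Phi\subset C(\T^2)$, the subspace of uniformly continuous functions. What must be checked is (A) $d^{(q_{n+1})}(T_{n+1},T)\to 0$, and (B) for each $g\in\Phi$, $\frac{1}{q_{n+1}}\sum_{j=0}^{q_{n+1}-1}g\circ T_{n+1}^j(x)\to\int g\,d\mu$ uniformly in $x$.

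The heart of the argument is (B), and the key step is a change of variable that reduces the sum to a form matching Proposition \ref{longest}. Writing $T_{n+1}^j=H_n^{-1}\circ h_{n+1}^{-1}\circ\phi^{j\a_{n+1}}\circ h_{n+1}\circ H_n$ and setting $\tilde g_n:=g\circ H_n^{-1}$, $y:=h_{n+1}\circ H_n(x)$, one has
\begin{align*}
\frac{1}{q_{n+1}}\sum_{j=0}^{q_{n+1}-1}g(T_{n+1}^j x)=\frac{1}{q_{n+1}}\sum_{j=0}^{q_{n+1}-1}\tilde g_n\bigl(h_{n+1}^{-1}\circ\phi^{j\a_{n+1}}(y)\bigr).
\end{align*}
Imposing (as an additional, easily satisfied, condition on $k_n$) that $\gcd(p_{n+1},q_{n+1})=1$, the multiset $\{j\a_{n+1}\bmod 1:0\le j<q_{n+1}\}$ coincides with $\{i/q_{n+1}:0\le i<q_{n+1}\}$, so the sum rearranges into exactly the quantity bounded by Proposition \ref{longest}, applied to $\tilde g_n$ in place of $g$. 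That proposition's hypothesis requires $\tilde g_n$ to be $(1/l_n,\e_n)$-uniformly continuous with $\e_n\to 0$, and this is exactly where the inductive constraint \ref{ln} pays off. Since $H_n$ is area-preserving on $\T^2$ and homotopic to the identity, $\det DH_n\equiv 1$, and the two-dimensional identity $\|A\|=\|A^{-1}\|$ for $2\times 2$ matrices with $\det A=1$ gives $\|DH_n^{-1}\|_0=\|DH_n\|_0$. Thus $H_n^{-1}$ is Lipschitz with constant smaller than $l_n/2^n$, so if $\omega_g$ denotes the modulus of continuity of $g$ then $\tilde g_n$ is $(1/l_n,\omega_g(2^{-n}))$-uniformly continuous. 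Using $\int\tilde g_n\,d\mu=\int g\,d\mu$ (by $H_n$-invariance of $\mu$), Proposition \ref{longest} yields
\begin{align*}
\left|\frac{1}{q_{n+1}}\sum_{j=0}^{q_{n+1}-1}g(T_{n+1}^j x)-\int g\,d\mu\right|<2\omega_g(2^{-n})+\frac{6\|g\|_0}{n^2},
\end{align*}
uniformly in $x$, giving (B).

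For (A), Proposition \ref{conv} gives $T_n\to T$ in $\text{Diff }^\omega_\rho(\T^2,\mu)$, hence $L:=\sup_n\|DT_n\|_0<\infty$. Iterating the mean-value inequality gives $d^{(q_{n+1})}(T_{n+1},T)\le q_{n+1}L^{q_{n+1}}d_\rho(T_{n+1},T)$. The freedom in the choice of $k_n$ (the ``one more demand'' alluded to in the proof of Proposition \ref{conv}) lets us strengthen \ref{k_n restriction} to
\begin{align*}
\|h_{n+1}^{-1}\circ\phi^{1/q_{n+1}}\circ h_{n+1}-I\|_\rho<\frac{1}{2^n\|DH_n\|_\rho\, q_{n+1}L^{q_{n+1}}},
\end{align*}
which is achievable because the left-hand side tends to $0$ as $k_n\to\infty$. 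Following the telescoping estimate in the proof of Proposition \ref{conv}, this yields $d_\rho(T_{n+1},T)\le 2^{1-n}(q_{n+1}L^{q_{n+1}})^{-1}$, and (A) follows. Lemma \ref{convcond} then concludes that $T$ is uniquely ergodic.

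The main obstacle is the change-of-variable step in (B): Proposition \ref{longest} is an estimate about composition with the ``inner'' map $h_{n+1}^{-1}\circ\phi^{j/q_{n+1}}$ on $\T^2$ directly and knows nothing about the (possibly very intricate) outer conjugation $H_n$. Absorbing $H_n$ into $g$ via $g\mapsto g\circ H_n^{-1}$ is legitimate because $\mu$ is $H_n$-invariant, but the resulting test function must be uniformly continuous at the small scale $1/l_n$ demanded by the proposition, and this is exactly what the inductive condition \ref{ln} ($l_n>2^n\|DH_n\|_0$) was built into the construction to guarantee.
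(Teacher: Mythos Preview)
Your treatment of (B) is correct and matches the paper's approach: the same change of variable $y=H_{n+1}(x)$, the same reordering of the orbit via $\gcd(p_{n+1},q_{n+1})=1$, and the same appeal to Proposition \ref{longest}. Your use of the $2\times 2$ identity $\|A\|=\|A^{-1}\|$ for $\det A=1$ to make condition \ref{ln} directly yield $(1/l_n,\omega_g(2^{-n}))$-uniform continuity of $g\circ H_n^{-1}$ is a clean touch; the paper instead restricts to a countable dense family of Lipschitz $g_k$ and argues via their Lipschitz constants, but the substance is the same.

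Part (A), however, has a real gap. Your strengthened condition
\[
\|h_{n+1}^{-1}\circ\phi^{1/q_{n+1}}\circ h_{n+1}-I\|_\rho<\frac{1}{2^n\|DH_n\|_\rho\, q_{n+1}L^{q_{n+1}}}
\]
cannot be achieved by choosing $k_n$ large. Both sides depend on $k_n$ through $q_{n+1}=k_nl_n^2q_n$: the left side decays only like $C_n/q_{n+1}$ (a Lipschitz bound for $t\mapsto h_{n+1}^{-1}\phi^t h_{n+1}$ near $t=0$), whereas the right side carries $L^{q_{n+1}}$ in the denominator and hence decays \emph{exponentially} in $k_n$ as soon as $L>1$. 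So sending $k_n\to\infty$ makes the inequality fail, not hold. There is also a circularity in using $L=\sup_m\|DT_m\|_0$ inside the inductive choice of $k_n$, since $L$ depends on the entire future construction.

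The paper (following \cite{FSW}) avoids the crude iterated mean-value bound altogether by using the conjugacy structure. Since $h_{n+1}$ commutes with $\phi^{\a_n}$ one has, for every $i$,
\[
T_{m+1}^i=H_m^{-1}\circ\phi^{i\a_m}\circ\bigl(h_{m+1}^{-1}\circ\phi^{i/q_{m+1}}\circ h_{m+1}\bigr)\circ H_m,\qquad T_m^i=H_m^{-1}\circ\phi^{i\a_m}\circ H_m,
\]
so $d(T_{m+1}^ix,T_m^ix)\le \|DH_m^{-1}\|_0\,\|h_{m+1}^{-1}\circ\phi^{i/q_{m+1}}\circ h_{m+1}-I\|_0$ with no exponential factor. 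For $0\le i\le q_m$ the angle $i/q_{m+1}\le 1/(k_ml_m^2)$ is uniformly small, so choosing $k_m$ large gives $d^{(q_m)}(T_{m+1},T_m)<2^{-m}$; telescoping then yields $d^{(q_n)}(T_n,T)\le\sum_{m\ge n}d^{(q_m)}(T_{m+1},T_m)<2^{1-n}\to 0$. This is the ``one more demand'' on $k_n$ alluded to in Proposition \ref{conv}, and it is the step your argument should substitute for the iterated-MVT bound.
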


\begin{proof}
Let $G=\{g_k\}$ be a dense set of Lipshitz functions dense in $C^0(\T^2,\R)$. Let $K_n$ be a uniform Lipshitz constants for $g_1\circ H_{n}^{-1},\ldots,g_n\circ H_{n}^{-1}$. Then from proposition \ref{longest} with $\e=\frac{1}{n^2}$ and $N$ such that $l_N>n^2K_n$, we get for each $1\leq k\leq n$,
\begin{align*}
\Big|\frac{1}{q_{N+1}}\sum_{i=0}^{q_{N+1}-1}g_k\circ H_{N+1}^{-1}\circ \phi^{\frac{i}{q_{N+1}}}(x)-\int_{\T^2} g_k\circ H_{N+1}^{-1} d\mu\Big|<\frac{2}{n^2}+\frac{6\|g_k\|_0}{N^2}
\end{align*}
Using the fact that $H_{N+1}$ is measure preserving and reordering the orbit as necessary, we get,
\begin{align*}
\Big|\frac{1}{q_{N+1}}\sum_{i=0}^{q_{N+1}-1}g_k\circ T^i(x)-\int_{\T^2} g_k d\mu\Big|<\frac{2}{n^2}+\frac{6\|g_k\|_0}{N^2}
\end{align*}
Taking $n\to\infty$, the first condition in Lemma \ref{convcond} is satisfied. For the second condition one can check that:
\begin{align*}
d^{(q_n)}(T_{n+1},T_n)<\frac{1}{2^n}
\end{align*}
Note that here we map need to choose a $k_n$ larger that the one required in \ref{k_n restriction} since we want closeness of $h_{n+1}^{-1}\circ\phi^\frac{i}{k_nl_n^2q_n}\circ h_{n+1}$ to $I$ for all $i\leq q_n$. So the second condition of the lemma is also satisfied.
\end{proof}

%%%%%%%%%%%%%%%%%%%%%%%%%%%%%%%%%%%%%%%%%%%%%%%%%%%%%%%%%%%%%%%%%%%%%%%%%%

\section{Sketch of the construction for higher dimensional torus} \label{section_high_dimension}

Here we point out that our construction can easily be generalized to a higher dimensional torus. More precisely, for $d=1,2,3,\dots$, we can obtain a real-analytic diffeomorphism $f_d:\T^d\to\T^d$ that is uniquely ergodic with respect to the Lebesgue measure and metrically isomorphic to an irrational rotation of the circle. The case where $d=1$ is just the irrational rotation itself and complete proofs for the $d=2$ case has been given in this article. Now let us assume $d\geq 3$ and we will try to give a description of the combinatorics that would allow a generalization of the present case.

Let $\phi$ be a $\T^1$ action on $\T^d$ by translation on the first coordinate. More precisely,
\begin{align*}
\phi^t:\T^d\to\T^d\quad\quad \text{defined by}\quad\phi^t:(x_1,x_2,\ldots,x_d)\mapsto (x_1+t,x_2,\ldots,x_d)
\end{align*}
Our target is to construct a real analytic diffeomorphism $h_{n+1}^{(d)}:\T^d\to\T^{(d)}$. $h_{n+1}^{(d)}$ will be constructed close to a discontinuous map $\tilde{h}_{n+1}^{(d)}$ which maps a partition of small diameter ($\mathcal{G}_{l_n,q_n}^{(d)}$) to a partition permuted cyclically by the $\T^1$ action ($\mathcal{T}_{l_n^{d}q_n}^{(d)}$). These two partitions are defined as follows:
\begin{align*}
\mathcal{T}_{l_n^dq_n}^{(d)}&=\big\{\Delta_{i,l_n^dq_n}:=[\frac{i}{l_n^dq_n},\frac{i+1}{l_n^dq_n})\times \T^{(d-1)}: i=0,1,\ldots, l_n^dq_n-1\big\}\\
\mathcal{G}_{l_n,q_n}^{(d)}&=\big\{[\frac{i_1}{l_nq_n},\frac{i_1+1}{l_nq_n})\times[\frac{i_2}{l_n},\frac{i_2+1}{l_n})\times\ldots\times [\frac{i_d}{l_n},\frac{i_d+1}{l_n}): \\
& \qquad\qquad\qquad\qquad\qquad\qquad\qquad i_1=0,1,\ldots l_nq_n-1,\;(i_2,\ldots, i_d)\in \{0,1,2,\ldots, l_n-1\}^d\big\}
\end{align*}
We construct $\tilde{h}_{n+1}^{(d)}$ by a finite induction argument on dimension. Note that for $d=2$ we put $\mathcal{T}^{(2)}_{l_nq_n}=\mathcal{T}_{l_nq_n}$, $\tilde{h}_{n+1}^{(2)}=\tilde{h}_{n+1}$ and observe that $\mathcal{G}_{l_n,q_n}^{(2)}=(\tilde{h}_{n+1}^{(2)})^{-1}\mathcal{T}_{l_n,q_n}^{(2)}$ (see \ref{T} and \ref{G}). This completes the construction of for $d=2$ and also starts the induction.

Next we assume that the construction has been carried out upto $d-1$ and we construct $\tilde{h}_{n+1}^{(d)}$. Before we do that, observe that 
\begin{align*}
((L_n^{-1}\circ\tilde{h}_{n+1}^{d-1}\circ L_n)^{-1}\times I) (\mathcal{T}_{l_n^dq_n}^{(d)}) & =\big\{[\frac{i_1}{l_n^2q_n},\frac{i_1+1}{l_n^2q_n}]\times[\frac{i_2}{l_n},\frac{i_2+1}{l_n}]\times\ldots\times [\frac{i_{d-1}}{l_n},\frac{i_{d-1}+1}{l_n}]\times \T^1: \\
& \quad i_1=0,1,\ldots l_n^2q_n-1,\;(i_2,\ldots, i_{d-1})\in \{0,1,2,\ldots, l_n-1\}^{d-1}\big\}
\end{align*}
Here $I$ is the identity map on $\T^1$ and $L_n$ is multiplication in the first coordinate by $l_n$. So we observe that we have the structure of the partition $\mathcal{G}^{(d)}_{l_n,q_n}$ apart for the last coordinate. Now we consider the following three maps from $\T^d\to\T^d$:
\begin{align*}
\tilde{h}_{1,n+1}^{(d)}((x_1,\ldots, x_d))&:=(x_1+\tilde{\psi}_{1,n+1}(x_d) \mod 1\; ,\ldots,\; x_d)\\
\tilde{h}_{2,n+1}^{(d)}((x_1,\ldots, x_d))&:=(x_1\; ,\ldots,\; x_d+\tilde{\psi}_{2,n+1}(x_1) \mod 1)\\
\tilde{h}_{3,n+1}^{(d)}((x_1,\ldots, x_d))&:=(x_1-\tilde{\psi}_{3,n+1}(x_d) \mod 1\; ,\ldots,\; x_d)
\end{align*}
See \ref{tilde psi} for definition of $\tilde{\psi}_{i,n+1}$. Observe that $\tilde{h}_{3,n+1}^{(d)}\circ \tilde{h}_{2,n+1}^{(d)}\circ \tilde{h}_{1,n+1}^{(d)}$
maps $\mathcal{G}^{(d)}_{l_n,q_n}$ to the partition $(L_n^{-1}\circ (\tilde{h}_{n+1}^{d-1})^{-1}\circ L_n\times I) (\mathcal{T}_{l_n^dq_n}^{(d)})$. Hence we define 
\begin{align*}
\tilde{h}^{(d)}_{n+1}=\tilde{h}_{3,n+1}^{(d)}\circ \tilde{h}_{2,n+1}^{(d)}\circ \tilde{h}_{1,n+1}^{(d)}\circ (L_n^{-1}\circ (\tilde{h}_{n+1}^{d-1})^{-1}\circ L_n\times I)
\end{align*}
So, $\tilde{h}^{(d)}_{n+1}$ constructed above maps the partition $\mathcal{G}^{(d)}_{l_n,q_n}$ to the partition $\mathcal{T}^{(d)}_{l_n^dq_n}$. However this map is discontinuous. One can use entire functions to approximate the step functions (in the sense of lemma \ref{approx}) that went in the definition of $\tilde{h}^{(d)}_{n+1}$ and construct a real-analytic diffeomorphism $h^{(d)}_{n+1}$ `close' to $\tilde{h}^{(d)}_{n+1}$. Our required real-analytic diffeomorphism $f_d$ can then be obtained after taking a limit of the conjugates in the real-analytic topology.

\vspace{1cm}

\noindent \emph{Acknowledgements.} The author would like to thank Anatole Katok for introducing the problem and highly appreciates his support by numerous useful discussions and encouragement. The author would also thank Changguang Dong for  many helps he provided. Finally, the author extends his gratitude towards Philipp Kunde for carefully going through an initial draft of this paper and pointing out a critical error. He also recommended a correction which worked and is incorporated in the present version.

%%%%%%%%%%%%%%%%%%%%%%%%%%%%%%%%%%%%%%%%%%%%%%%%%%%%%%%%%%%%%%%%%%%%%%%%%%

\end{document}